\newcommand{\ds}{\displaystyle}
\newcommand{\R}{\mathbb{R}}
\newtheorem{theorem}{Theorem}
\newtheorem{lemma}[theorem]{Lemma}
\newtheorem{remark} {Remark}
\def\cp{\mathrm{cap}\,}
\newcommand{\Om}{\Omega}
\newcommand{\vps}{\varepsilon}
\newcommand{\bp}{\begin{proof}}
\newcommand{\ep}{\end{proof}}
\begin{document}
\title{On functionals involving the $p$-capacity and the $q$-torsional rigidity}

\author{{Michiel van den Berg} \\
School of Mathematics, University of Bristol\\
Fry Building, Woodland Road\\
Bristol BS8 1UG\\
United Kingdom\\
\texttt{mamvdb@bristol.ac.uk}\\
\\
{Nunzia Gavitone}\\
Dipartimento di Matematica e Applicazioni ``Renato Caccioppoli'' \\
Universit\`a degli Studi di Napoli Federico II\\
Via Cintia, Monte S. Angelo, I-80126 Napoli\\
Italy\\
\texttt{nunzia.gavitone@unina.it}}

\date{24 July 2025}\maketitle
\vskip 1.0truecm \indent
\begin{abstract}\noindent
Upper bounds are obtained for the $p$-capacity of compact sets in $\R^d$,  with $d \ge 2$ and $1<p<d$. Upper and lower bounds are obtained for the product of $p$-capacity and powers of the $q$-torsional rigidity over the collection of all non-empty, open, bounded and convex sets in
$\R^d$ with either a perimeter constraint, or a measure constraint, or a combination of perimeter and measure constraints. For some range of parameters we identify the ball  as the unique (up to homotheties)  maximiser or minimiser respectively.
\end{abstract}
%\vskip 1.0truecm
 \noindent \ \ \   { Mathematics Subject
Classification (2020)}: 49Q10, 49J45, 49J40, 35J25.\\
\begin{center} \textbf{Keywords}: $p$-capacity, $q$-torsional rigidity, perimeter, measure.
\end{center}
\mbox{}

\section{Introduction\label{sec1}}

For a compact set $K\subset \R^d,\,d\ge 2$ we recall, for $1<p<d$, a definition of its $p$- capacity $\cp_p(K)$. See for example \cite{HKM} or, for an equivalent definition, \cite{EG}.
\begin{align}\label{e1}
\cp_p(K)=&\inf\Big\{\int_{\R^d}|D u|^p: u\ge {\bf 1}_K\,\cap\, (Du\in L^p(\R^d))\,\cap (u\in C^0(\R^d))\,\nonumber\\&\cap\, (u\, \textup{vanishing at}\, \infty)\Big\},
\end{align}
where  $\bf 1_{\cdot}$ denotes the indicator function, and where vanishing at $\infty$ means \newline $(\forall \varepsilon>0)[|\{u>\varepsilon\}|<\infty]$,  and where $|\cdot|$ denotes $d$-dimensional Lebesgue measure.

It follows from \eqref{e1} that for a homothety $tK,$
\begin{equation}\label{e2}
\cp_p(tK)=t^{d-p}\cp_p(K),\,t>0.
\end{equation}
Furthermore if $K_1$ and $K_2$ are compact sets in $\R^d$ then $K_1\subset K_2$ implies $\cp_p(K_1)\le \cp_p(K_2)$.

For an open set $\Omega\subset \R^d$ we recall, for $q>1$, its $q$-torsional rigidity, or $q$-torsion for short:
\begin{equation}\label{e3}
T_q(\Omega)=\Bigg(\sup_{\psi\in W_0^{1,q}(\Omega)\setminus\{0\}} \frac{\displaystyle\left(\int_\Omega|\psi| \right)^{q}}{\displaystyle\int_\Omega |D \psi|^q }\Bigg)^{1/(q-1)},
\end{equation}

See for example \cite{JDE}. By \eqref{e3} we see that the $q$-torsion has the following scaling property:
\begin{equation}\label{e4}
T_q(t\Omega)=t^{d+q'}T_q(\Omega), \, q'=\frac{q}{q-1},\,t>0.
\end{equation}

We denote the closure, boundary, perimeter, diameter and measure of a measurable set $A$ by $\overline{A}$, $\partial A$, $P(A)$, $\textup{diam}(A)$ and $|A|$ respectively. Moreover, we denote the inradius of a non-empty set $A\subset \R^d$ by $\rho(A)=\sup_{x\in A}\sup\{r>0:B_r(x)\subset A\}$. The $a$-dimensional Hausdorff measure is  denoted by $\mathcal{H}^a(A)$.

Inequalities between torsional rigidity, first Dirichlet eigenvalue etc. go back at least as far as \cite{PSZ}. For more recent contributions we refer to \cite{MaNa}, \cite{cri}, \cite{MBP}, \cite{BBP1} and \cite{BBP2}.

In this paper we consider maximisation and minimisation problems of the functional
\begin{equation}\label{e5}
G_{p,q,r,\alpha}(\Om)=\frac{\cp_p(\overline{\Om})T_q(\Om)^r}{|\Om|^{\alpha}P(\Om)^{\beta}},
\end{equation}
over the collection of non-empty, open, bounded and convex sets.
Here $\alpha\ge 0$, and $\beta\ge 0$ are parameters which satisfy
\begin{equation}\label{e6}
d\alpha+(d-1)\beta=d-p+(d+q')r.
\end{equation}
In case the perimeter  term is absent, that is $\beta=0$, we put
\begin{equation}\label{e7}
\alpha^*=\frac1d(d-p+(d+q')r).
\end{equation}

The special case $p=q=2$ has been considered in \cite{vdB1,MB,MM}.

By scaling of measure and perimeter, \eqref{e2}, \eqref{e4} and \eqref{e6} we conclude that $G_{p,q,r,\alpha}(t\Om)=G_{p,q,r,\alpha}(\Om),\,t>0$.
It was shown in  \cite[Theorem 3]{vdB1} that if $0\le \alpha\le \frac{2}{d}$, then
\begin{align*}%\label{e9}
\sup\{G_{2,2,1,\alpha}(\Omega): \Omega\,\, \textup{non-empty, open, bounded, convex in}&\, \R^d\}\nonumber\\&
=G_{2,2,1,\alpha}(B_1),
\end{align*}
where $B_1$ is an open ball with radius $1$ in $\R^d$. Any open ball is a maximiser of $G_{2,2,1,\alpha}$.

A key ingredient in the proof of the previous statement is the following isoperimetric upper bound \cite[(6)]{vdB1} for the Newtonian capacity for non-empty, compact and convex sets in $\R^d$:
\begin{equation}\label{e10}
\cp_2(K)\le \frac{d-2}{d}\frac{P(K)^2}{|K|},
\end{equation}
with equality for a closed ball.

The proof of \eqref{e10} uses an idea going back to \cite{CFG}. There upper bounds for $\cp_2(K)$ were obtained by restricting the class of test functions to the ones depending only the distance to the boundary of $K$.
We recall the following notation from \cite{vdB1}. For a non-empty compact set $K$ we denote for $t>0$ its closed $t$-neighbourhood by
\begin{equation}\label{e11}
K_t=\{x\in\R^d: d_K(x)\le t\},
\end{equation}
where
\begin{equation*}%\label{e12}
d_K(x)=\min\{|x-y|:\,y\in K\},x\in \R^d,
\end{equation*}
is the distance to $K$ function.

If $K$ is compact then $\R^d\setminus K$ is open and consists of a countable union of open sets. Since $K$ is bounded there is precisely one unbounded component of its complement, which is denoted by $U_K$. Let $\tilde{K}= \R^d\setminus U_K$. Let $A_K$ be the union of all bounded components of the complement of $K$. Then $A_K$ is open, and $K=\R^d\setminus(A_K\cup U_K)\subset (\R^d\setminus U_K):=\tilde{K}$. It is straightforward to show that $\tilde{K}=K\cup A_K$, and that $\cp_2(K)=\cp_2(\tilde{K})$.
The  bound in Theorem \ref{the1}(i) will be given in terms of the perimeter of the closed $t$-neighbourhood of $\tilde{K}$.

If $K$ is compact and $\partial K$ is $C^2$, oriented by an outward unit normal vector field, then we denote the mean curvature map by $H:\partial K\rightarrow \R$, and define its integral by
\begin{equation*}%\label{ee}
M(K)=\int_{\partial K}Hd\mathcal{H}^{d-1}.
\end{equation*}
In Theorem \ref{the1}(iv) we give an upper bound for $\cp_p(K)$ in terms of $M(K)$ and $P(K)$.

\begin{theorem}\label{the1} Let $K\ne \emptyset$, and let $1<p<d$.
\begin{enumerate}
\item[\textup{(i)}] If  $K$ is compact in $\R^d,\,d\ge 2$, and if $\int_{(0,\infty)}dt\, P(\tilde{K}_t)^{-1/(p-1)}<\infty$, then
\begin{equation*}%\label{e13}
\cp_p(K)\le \cp_p(\tilde{K})\le \Big(\int_{(0,\infty)}dt\, P(\tilde{K}_{t})^{-1/(p-1)}\Big)^{1-p}<\infty.
\end{equation*}
with equality if $K$ is a closed ball.
\item[\textup{(ii)}]If $K$ is compact in $\R^d,\,d\ge 2,$ and if
\begin{equation*}%\label{e14}
\lim_{s\downarrow 0}\int_{(s,\infty)}dt\, P(\tilde{K}_t)^{-1/(p-1)}=\infty,
\end{equation*}
then $\cp_p(K)=0$.
\item[\textup{(iii)}]If $K$ is compact in $\R^d,\,d\ge 2,$ then
\begin{equation}\label{e15}
\cp_p(K)\le \inf_{a>0}\frac{1}{a^p}|K_a|.
\end{equation}
\item[\textup{(iv)}] If $K$ is compact, convex, and if $\partial K$ is $C^2$, then
\begin{equation}\label{e16}
\cp_p(K)\le \Big(\frac{d-p}{p-1}\Big)^{p-1}\frac{M(K)^{p-1}}{P(K)^{p-2}},
\end{equation}
with equality if $K$ is a closed ball.
\end{enumerate}
\end{theorem}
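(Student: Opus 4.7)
The plan for all four parts is to insert explicit admissible test functions into the variational definition \eqref{e1} of $\cp_p$. Parts (i)--(iii) use the two classical choices built from the distance function; part (iv) combines (i) with the tube formula and Alexandrov--Fenchel inequalities.

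For (i), take $u(x)=\phi(d_{\tilde{K}}(x))$ with $\phi:[0,\infty)\to[0,1]$ nonincreasing, $\phi(0)=1$, $\phi(\infty)=0$. The coarea formula gives $\int|Du|^p\,dx=\int_0^\infty|\phi'(t)|^p P(\tilde{K}_t)\,dt$, and Hölder applied to the constraint $\int_0^\infty|\phi'(t)|\,dt=1$ yields the infimum $\bigl(\int_0^\infty P(\tilde{K}_t)^{-1/(p-1)}\,dt\bigr)^{1-p}$; the equality $\cp_p(K)=\cp_p(\tilde{K})$ is immediate since the variational problem is insensitive to bounded components of the complement, and equality for the ball comes from $\phi$ being an optimizer. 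For (ii), apply (i) to the compact set $\tilde{K}_s$, using $(\tilde{K}_s)_t=\tilde{K}_{s+t}$: since $K\subset\tilde{K}_s$, one has $\cp_p(K)\le\cp_p(\tilde{K}_s)\le\bigl(\int_s^\infty P(\tilde{K}_u)^{-1/(p-1)}\,du\bigr)^{1-p}\to 0$ as $s\downarrow 0$. For (iii), the truncated distance $u(x)=\max(0,1-d_K(x)/a)$ has $|Du|\le 1/a$ on its support contained in $K_a$, giving $\int|Du|^p\,dx\le|K_a|/a^p$; infimising over $a>0$ produces \eqref{e15}.

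The substantive case is (iv). For convex $K$ with $C^2$ boundary, the outward normal map $x\mapsto x+t\nu(x)$ is a diffeomorphism $\partial K\to\partial K_t$ with Jacobian $\prod_{i=1}^{d-1}(1+t\kappa_i(x))$, so the tube formula yields
\[
P(K_t)=\int_{\partial K}\prod_{i=1}^{d-1}(1+t\kappa_i)\,d\mathcal{H}^{d-1}=\sum_{k=0}^{d-1}E_k\,t^k,\qquad E_k:=\int_{\partial K}e_k(\kappa)\,d\mathcal{H}^{d-1},
\]
where $e_k$ is the $k$th elementary symmetric polynomial in the principal curvatures; in particular $E_0=P(K)$ and $E_1=(d-1)M(K)$. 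The $E_k$ are proportional via the Steiner formula to the quermassintegrals $W_{k+1}(K)$, and the Alexandrov--Fenchel inequalities $W_j^2\ge W_{j-1}W_{j+1}$ make the ratio $W_{j+1}/W_j$ nonincreasing in $j$. Iterating this log--concavity yields $W_{k+1}\le W_2^k/W_1^{k-1}$, which in our normalization reads $E_k\le\binom{d-1}{k}M(K)^k/P(K)^{k-1}$ for all $k$. Summing in $k$ produces the key polynomial bound
\[
P(K_t)\le P(K)\left(1+\frac{M(K)}{P(K)}\,t\right)^{d-1}.
\]
Substituting into (i) and evaluating $\int_0^\infty(1+tM/P)^{-(d-1)/(p-1)}\,dt=(P/M)(p-1)/(d-p)$ (convergent since $p<d$) gives $\int_0^\infty P(K_t)^{-1/(p-1)}\,dt\ge(p-1)P(K)^{(p-2)/(p-1)}/((d-p)M(K))$, and raising to the power $-(p-1)$ produces \eqref{e16}. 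For a closed ball every Alexandrov--Fenchel inequality is an equality, so the polynomial bound on $P(K_t)$ is attained and \eqref{e16} is sharp.

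The main obstacle is the iterated Alexandrov--Fenchel step producing the polynomial domination $P(K_t)\le P(K)(1+M(K)t/P(K))^{d-1}$; once this geometric inequality is secured, the rest of (iv) is bookkeeping with Hölder and one elementary integration in $t$.
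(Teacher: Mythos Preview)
Your proposal is correct and, for parts (i), (ii), and (iv), essentially matches the paper's argument: the paper also builds test functions $f(d_{\tilde{K}_s})$, optimises via the Euler--Lagrange condition $(f')^{p-1}P(\tilde{K}_{s+t})=\text{const}$ (equivalent to your H\"older step), and for (iv) uses the same Aleksandrov--Fenchel inequality $W_nW_1^{n-2}\le W_2^{n-1}$. Your polynomial bound $P(K_t)\le P(K)\bigl(1+M(K)t/P(K)\bigr)^{d-1}$ is just a repackaging of this, obtained after the paper's change of variable $t=(W_1/W_2)\theta$ is unwound. The paper is somewhat more careful than you are about the $s>0$ regularisation in (i), which it uses to verify admissibility of the test function (continuity, vanishing at infinity, $L^p$ gradient); you gloss over this but the idea is the same.

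The one genuine difference is (iii). The paper derives it from the integral bound in (i) via H\"older, writing $a=\int_0^a 1\,dt\le\bigl(\int_0^\infty P(K_t)^{-1/(p-1)}\,dt\bigr)^{(p-1)/p}|K_a|^{1/p}$ and then invoking \eqref{e32}. Your direct approach---insert the truncated distance $u=\max(0,1-d_K/a)$ into the variational definition---is shorter, self-contained, and in fact sidesteps the paper's intermediate claim $P(\tilde{K}_t)\le P(K_t)$ entirely; what it costs is that it does not exhibit (iii) as a corollary of (i).
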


Inequalities for the $p$-capacity of compact, convex sets with smooth boundary in terms of integrals over powers of the mean curvature have been obtained previously in for example \cite[Theorem 3.1]{xiao}. We see that (3.3) in that paper jibes with \eqref{e16} for $p=2$, taking the different normalisations of the $p$-capacity into account. For $2<p<d$ \eqref{e16} implies, by H\"older's inequality the first inequality in (3.3), and inequality (3.4). Moreover \eqref{e16} also holds for $1<p<2$ and $d=2$. For the anisotropic version of \cite[Theorem 3.1]{xiao} we refer to \cite[Theorem 3]{li}.

It follows from the Aleksandrov-Fenchel inequality \eqref{e39} below (for $k=2,j=1,i=0$) together with \eqref{e16}, that
\begin{equation}\label{e17}
\cp_p(K)\le \Big(\frac{d-p}{d(p-1)}\Big)^{p-1}\frac{P(K)^p}{|K|^{p-1}},
\end{equation}
with equality if $K$ is any closed ball.  It can be shown that \eqref{e17} holds without the $C^2$ assumption on $\partial K$.

It was shown in \cite[Theorem 2(iii)]{vdB1} that if $d=3$, then the functional $G_{2,2,1,\alpha^*}(\Om)$ is bounded from above on the collection of  open, bounded,  convex sets.
The occurrence of a logarithmic factor for elongated convex, compact sets in the upper bound the Newtonian capacity implies the existence of a maximiser. See \cite[p.354]{vdB1}. These logarithmic factors for $d=3$ are well known. See for example \cite[p.260]{IMcK}.
In Theorem \ref{the2} we show that they occur for other values of $p$ and $d$.
Define the isoperimetric  ratio of $K$ by
\begin{equation}\label{e18}
I(K)=\frac{\omega_dd^d|K|^{d-1}}{P(K)^d},
\end{equation}
where $\omega_d=|B_1|$. By the isoperimetric inequality $0<I(K)\le 1$ with equality if and only if $K$ is a ball modulo sets of measure $0$. $I(K)$ is a measure of asymmetry.
\begin{theorem}\label{the2} Let $K$ be a convex, compact set and with non-empty interior. If $p=d-1,\,d=3,4,...,$ then
\begin{equation*}%\label{e19}
\cp_{d-1}(K)\le d^{2-d}(d-1)2^{d-2}\frac{P(K)^{d-1}}{|K|^{d-2}}\Big(\log (I(K)^{-1})\Big)^{2-d}.
\end{equation*}
\end{theorem}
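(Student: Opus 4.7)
The plan is to apply Theorem~\ref{the1}(i), which in the present setting ($K$ convex compact with non-empty interior, so $\tilde K=K$, and $p=d-1$) yields $\cp_{d-1}(K)\le\bigl(\int_0^\infty P(K_t)^{-1/(d-2)}\,dt\bigr)^{-(d-2)}$. The main point is that the exponent $1/(p-1)=1/(d-2)$ is precisely the one that, when applied to the degree-$(d-2)$ term of the Steiner polynomial
\[
P(K_t)=\sum_{j=1}^d p_j t^{j-1},\qquad p_j=j\binom{d}{j}W_j(K),
\]
produces the integrand $p_{d-1}^{-1/(d-2)}t^{-1}$, yielding a logarithm upon integration. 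Here $W_j(K)$ denotes the $j$-th quermassintegral; in particular $p_1=P(K)$, $p_d=d\omega_d$, and $p_{d-1}=(d-1)\omega_{d-1}V_1(K)$, where $V_1$ is the first intrinsic volume.

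To isolate the logarithmic contribution, restrict the integral to an interval $[a,b]$ on which $p_{d-1}t^{d-2}$ controls the whole polynomial, taking $a=(p_1/p_{d-1})^{1/(d-2)}$ and $b=p_{d-1}/p_d$: these are the smallest and largest $t$ at which $p_{d-1}t^{d-2}$ dominates, respectively, the endpoint terms $p_1$ (for $t\ge a$) and $p_d t^{d-1}$ (for $t\le b$). The intermediate terms $p_jt^{j-1}$, $1<j<d-1$, are dominated by $p_{d-1}t^{d-2}$ on $[a,b]$ via the log-concavity of $(p_j)_j$, which follows from the Aleksandrov--Fenchel log-concavity $W_j^2\ge W_{j-1}W_{j+1}$ (the inequality used to derive \eqref{e17} in the text) together with the log-concavity of $j\binom{d}{j}=d\binom{d-1}{j-1}$; the same log-concavity of $(p_j)$ also guarantees $a\le b$. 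Consequently $P(K_t)\le d\,p_{d-1}t^{d-2}$ on $[a,b]$, so
\[
\int_0^\infty P(K_t)^{-1/(d-2)}\,dt\ge(dp_{d-1})^{-1/(d-2)}\log(b/a),
\]
and hence $\cp_{d-1}(K)\le d\,p_{d-1}\,(\log(b/a))^{-(d-2)}$.

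The final step is to convert $p_{d-1}$ and $\log(b/a)$ into the quantities appearing in the claim. Iterating the Aleksandrov--Fenchel log-concavity from the left boundary values $W_0=|K|$, $W_1=P/d$ yields the uniform chain $W_j\le(P/d)^j|K|^{1-j}$ for $j\ge 1$, so in particular $p_{d-1}=d(d-1)W_{d-1}\le(d-1)d^{2-d}P(K)^{d-1}|K|^{2-d}$, which reproduces the prefactor of the claim. For the logarithm, one writes $(d-2)\log(b/a)=(d-1)\log V_1(K)-\log P(K)+\text{const}$ and bounds $V_1(K)$ from below via Urysohn's inequality $V_1(K)\ge d\omega_d^{(d-1)/d}\omega_{d-1}^{-1}|K|^{1/d}$. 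Combined with the identity $P(K)/|K|^{(d-1)/d}=d\omega_d^{1/d}(I(K)^{-1})^{1/d}$, this gives $\log(b/a)\ge d^{-1}\log I(K)^{-1}-C_d$; the complementary regime $\log I(K)^{-1}\le C_d$ is handled immediately by \eqref{e17}. The main obstacle will be matching the sharp constant $d^{2-d}(d-1)2^{d-2}$: the above approach yields the correct functional form but with an extra multiplicative factor depending on $d$, coming from the crude lower bound on $b/a$ supplied by Urysohn; producing the $2^{d-2}$ factor appearing in the claim likely requires either a sharper lower bound on $V_1$ tuned to the intermediate index $j=d-1$, or a more careful accounting of the boundary contributions $\int_0^a$ and $\int_b^\infty$ to the integral in Theorem~\ref{the1}(i).
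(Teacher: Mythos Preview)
Your argument up to the bound $\cp_{d-1}(K)\le d\,p_{d-1}\bigl(\log(b/a)\bigr)^{-(d-2)}$ is correct and rather elegant; the log-concavity of $(p_j)_j$ does exactly what you say. The gap is the one you yourself identify: the conversion of $p_{d-1}$ and $\log(b/a)$ into $P,|K|,I(K)$. You are forced to bound $p_{d-1}$ (equivalently $W_{d-1}$) \emph{from above} to get the prefactor and \emph{from below} (via Urysohn) to control $\log(b/a)$, and these two one-sided bounds do not match---the discrepancy is precisely a $d$-dependent constant that your scheme cannot eliminate. The difficulty is structural: by centring the analysis on the $(d-1)$st Steiner coefficient you have introduced an intermediate quermassintegral ($W_{d-1}$, the mean width) that does not appear in the final inequality, and any conversion back to $|K|$ and $P(K)$ costs a constant.

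The paper avoids $W_{d-1}$ altogether. Instead of your scale $a=(p_1/p_{d-1})^{1/(d-2)}$, it substitutes $t=(W_0/W_1)\theta$. After this change of variable the $n$th coefficient of the Steiner polynomial becomes $n\binom{d}{n}\,W_n W_0^{n-1}/W_1^{n}$; by Aleksandrov--Fenchel with $i=0$, $j=1$, $k=n$ this ratio is $\le 1$ for every $n$, and for $n=d$ it \emph{equals} $I(K)$ exactly. Thus the integrand is bounded below by
\[
d^{-1/(d-2)}\bigl((1+\theta)^{d-1}-\theta^{d-1}+I(K)\theta^{d-1}\bigr)^{-1/(d-2)},
\]
with the isoperimetric ratio already sitting inside. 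A short estimate ($(1+\theta)^{d-1}-\theta^{d-1}\le(d-1)(1+\theta)^{d-2}$, subadditivity of $x\mapsto x^{1/(d-2)}$, and restriction to $0\le\theta\le I(K)^{-1}-1$) then gives $\int\ge\tfrac12\log I(K)^{-1}$ times the natural prefactor $W_0/W_1^{(d-1)/(d-2)}$, which after raising to the power $-(d-2)$ produces exactly $d^{2-d}(d-1)2^{d-2}P^{d-1}/|K|^{d-2}$. The factor $2^{d-2}$ comes from that $\tfrac12$; no Urysohn-type bound is needed because the substitution is anchored at indices $0$ and $1$ (i.e.\ at $|K|$ and $P$) rather than at index $d-1$.
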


Theorem \ref{the2} enables us to prove that if  $p=q'r=d-1,\,d\ge 3$ then $G_{p,q,r,\alpha^*}(\Om)$ has a maximiser in the collection of non-empty, open, bounded, and convex sets in $\R^d$. See Theorem \ref{the_new1}(iv).

This paper is organised as follows. In Section \ref{sec2} below we prove Theorems \ref{the1}  and \ref{the2} respectively.
In Section \ref{sec3} we obtain results for the maximisation of $G_{p,q,r,\alpha^*}.$  We prove that for suitable parameters the ball is the unique
(up to hometheties) maximiser for $G_{p,q,r,\alpha}$ among the class of non-empty, open, convex sets in $\R^d$. In Section \ref{sec4} we obtain results for the minimisation of $G_{p,q,r,\alpha},$ and show that for some suitable range of parameters a minimiser exists.  In Section \ref{sec5} we make some final remarks.

\section{Proofs of Theorem \ref{the1} and Theorem \ref{the2}\label{sec2}}

\noindent{\it Proof of Theorem \ref{the1}.}
The starting point of the proof of Theorem \ref{the1} goes back to  \cite[Theorem 11]{CFG} where the authors obtain, for convex bodies $K$, an upper bound for $\cp(K)$ by restricting the test functions in \eqref{e1} to those depending on $d_K$ only. The strategy of the proof below follows the one of \cite[Theorem 1]{vdB1}.

Let $s>0$ be arbitrary, and let $\varphi=f(d_{\tilde{K}_s})$, where $f$  will be suitably chosen below. Then
\begin{equation*}%\label{e20}
|D\varphi|^p=(f'(d_{\tilde{K}_s}))^p|Dd_{\tilde{K}_s}|^p= (f'(d_{\tilde{K}_s}))^p.
\end{equation*}
By the coarea formula
\begin{equation}\label{e21}
\int_{\R^d}|D\varphi|^p= \int_{\R^d}(f'(d_{\tilde{K}_s}))^p=\int_{(0,\infty)}dr\, (f'(r))^pP(\tilde{K}_{s+r}).
\end{equation}
Minimising, formally, over all smooth $f$ with $f(0)=1$, $f(\infty)=0$ gives \newline $((f'(r))^{p-1}P(\tilde{K}_{s+r}))'=0.$ Hence $(f'(r))^{p-1}P(\tilde{K}_{s+r})=c$ for some $c\in \R$. It follows that
\begin{equation}\label{e22}
f(r)=c\int_{(r,\infty)}dt\,P(\tilde{K}_{s+t})^{-1/(p-1)}.
\end{equation}
In \cite{vdB1} it was shown that $t\mapsto P(\tilde{K}_t))^{-1}$ is continuous on $(0,\infty)$ except on a possibly countable, compact subset.
Since $K\ne \emptyset$ it contains a point say $0$. The $(s+t)$-neighbourhood of $0$ contains the closed ball $\overline{B(0;s+t)}$. This ball has perimeter $P(B_1)(s+t)^{d-1}$.
Since $\overline{B(0;s+t)}$ is convex and is a subset of $\tilde{K}_{s+t}$ we have
\begin{equation}\label{e23}
P(\tilde{K}_{s+t})\ge P(B_1)(s+t)^{d-1}.
\end{equation}
The integral in the right-hand side of \eqref{e22} converges since by \eqref{e23}
\begin{equation*}%\label{e24}
\int_{(r,\infty)}dt\,P(\tilde{K}_{s+t})^{-1/(p-1)}\le \frac{p-1}{d-p}P(B_1)^{-1/(p-1)}(s+r)^{(p-d)/(p-1)},
\end{equation*}
where we have used that $1<p<d$.
Since $f(0)=1$ we find that
\begin{equation}\label{e25}
f(r)=\frac{\int_{(r,\infty)}dt\, P(\tilde{K}_{s+t})^{-1/(p-1)}}{\int_{(0,\infty)}dt\, P(\tilde{K}_{s+t})^{-1/(p-1)}}.
\end{equation}
We now verify that $\varphi=f(d_{\tilde{K}_s})$ satisfies the constraints in \eqref{e1}.

\noindent(i) To prove continuity we have
\begin{align*}%\label{e26}
|\varphi(x)-\varphi(y)|&\le |f(d_{\tilde{K}_s}(x))-f(d_{\tilde{K}_s}(y))|\nonumber\\& \le \sup_{r\ge 0}|f'(r)||d_{\tilde{K}_s}(x)-d_{\tilde{K}_s}(y)|\nonumber\\&\le
\frac{\sup_{r\ge 0}P(\tilde{K}_{s+r})^{-1/(p-1)}}{\int_{(0,\infty)}dt\, P(\tilde{K}_{s+t})^{-1/(p-1)}}|x-y|\nonumber\\&
\le\frac{P(\tilde{K}_s)^{-1/(p-1)}}{\int_{(0,\infty)}dt\, P(\tilde{K}_{s+t})^{-1/(p-1)}}|x-y|,
\end{align*}
where we have used \eqref{e23}. Hence $\varphi$ is uniformly continuous. This in turn implies that $f\in L^1_\textup{loc}(\R^d)$.

\smallskip

\noindent(ii) To prove that $\varphi$ vanishes at infinity, we have by \eqref{e23} and \eqref{e25}
\begin{equation}\label{e27}
f(r)\le \frac{p-1}{d-p}P(B_1)^{-1/(p-1)}r^{(p-d)/(p-1)}\Big(\int_{(0,\infty)}dt\, P(\tilde{K}_{s+t})^{-1/(p-1)}\Big)^{-1}.
\end{equation}
By \eqref{e27}
\begin{align}\label{e28}
\{\varphi>\varepsilon\}&\subset\Big\{x\in\R^d: d_{\tilde{K}_s}(x)\le \Big(\frac{p-1}{d-p}\Big)^{(p-1)/(d-p)}\nonumber\\&
\times P(B_1)^{1/(p-d)}\Big(\int_{(0,\infty)}dt\, P(\tilde{K}_{s+t})^{-1/(p-1)}\varepsilon\Big)^{(p-1)/(p-d)}\Big\}.
\end{align}
Since $\tilde{K}_s$ is contained in a ball with radius $\textup{diam}(\tilde{K}_s),$ we have by \eqref{e28} that the level set $\{\varphi>\varepsilon\}$
is contained in a ball with radius
\begin{align*}%\label{e29}
\textup{diam}(\tilde{K}_s)&+\Big(\frac{p-1}{d-p}\Big)^{(p-1)/(d-p)}\nonumber\\&
\times P(B_1)^{1/(p-d)}\Big(\int_{(0,\infty)}dt\, P(\tilde{K}_{s+t})^{-1/(p-1)}\varepsilon\Big)^{(p-1)/(p-d)}.
\end{align*}
Hence this level set has finite measure.

\smallskip

\noindent(iii) To see that $D\varphi\in L^p(\R^d)$ we compute
by \eqref{e21} and \eqref{e25} that
\begin{equation}\label{e30}
\int_{\R^d}|D\varphi|^p\le\Big(\int_{(0,\infty)}dt\, P(\tilde{K}_{s+t})^{-1/(p-1)}\Big)^{1-p}<\infty.
\end{equation}
We conclude by (i), (ii) and (iii) above that $\varphi$ satisfies all constraints in \eqref{e1}. By \eqref{e1} and \eqref{e30},
\begin{equation}\label{e31}
\cp_p(K)= \cp_p(\tilde{K})\le \cp_p(\tilde{K}_s)\le \Big(\int_{(0,\infty)}dt\, P(\tilde{K}_{s+t})^{-1/(p-1)}\Big)^{1-p}<\infty.
\end{equation}
Theorem \ref{the1}(i) follows since $s>0$ was arbitrary.

\smallskip

Theorem \ref{the1}(ii) follows immediately from \eqref{e31}.

\smallskip

To prove Theorem \ref{the1}(iii) we let $a>0$ be arbitrary. Since $P(\tilde{K}_t)\le P(K_t),$ we have by \eqref{e11} that
\begin{equation}\label{e32}
\cp_p(K)\le \Big(\int_{(0,\infty)}dt\, P(K_{t})^{-1/(p-1)}\Big)^{1-p}.
\end{equation}
By H\"older's inequality with conjugate exponents $r=p$, $s=p/(p-1)$, and \eqref{e32} we have that
\begin{align*}%\label{e33}
a=\int_{[0,a]} dt&\le \Big(\int_{[0,a]}dt\,P(K_t)^{-1/p}(P(K_t))^{1/p}\nonumber\\&\le
\Big(\int_{[0,\infty)}dt\,P(K_t)^{-1/(p-1)}\Big)^{(p-1)/p}|K_a|^{1/p}\nonumber\\&
\le \cp_p(K)^{-1/p}|K_a|^{1/p}.
\end{align*}
This implies \eqref{e15} since $a>0$ was arbitrary.

\smallskip

To prove Theorem \ref{the1}(iv) we recall Steiner's formula for compact convex $K$ with $C^2$ boundary:
\begin{equation*}%\label{e34}
|K_t|=\sum_{n=0}^d \binom{d}{n} W_n(K)t^n,\,t>0,
\end{equation*}
where the $W_n(K)$ are the quermassintegrals for $K$. See \cite[Chapter 4(4.1)]{RS}.
These quermassintegrals can be expressed in terms of integrals over the surface $\partial K$ of polynomials in the $d-1$ principal curvatures.
In particular
\begin{equation}\label{e35}
W_0(K)=|K|, \ W_1(K)=d^{-1}P(K),\, W_2(K)=d^{-1}\int_{\partial K} Hd\mathcal{H}^{d-1}, W_d(K)=\omega_d.
\end{equation}
The coarea formula gives
\begin{equation}\label{e36}
P(K_t)=\sum_{n=1}^d n\binom{d}{n} W_n(K)t^{n-1},\,t>0.
\end{equation}
By the change of variable
\begin{equation}\label{e37}
t=\frac{W_1(K)}{W_2(K)}\theta,
\end{equation}
we obtain by \eqref{e32}, \eqref{e36} and \eqref{e37},
\begin{align}\label{e38}
&\int_{(0,\infty)}dt\, P(K_t)^{-1/(p-1)}\nonumber\\& =\frac{W_1(K)^{(p-2)/(p-1)}}{W_2(K)}\int_{(0,\infty)}d\theta \Big(\sum_{n=1}^d n\binom{d}{n} \frac{W_n(K)W_1(K)^{n-2}}{W_2(K)^{n-1}}\theta^{n-1}\Big)^{-1/(p-1)}.
\end{align}
The Aleksandrov-Fenchel inequalities \cite[(7.66)]{RS} read
\begin{equation}\label{e39}
W_j(K)^{k-i}\ge W_i(K)^{k-j}W_k(K)^{j-i},\,\,0\le i<j<k\le d.
\end{equation}
Let $j=2,\,i=1,\,k=n$ in \eqref{e39}. This gives
\begin{equation}\label{e40}
W_n(K)W_1(K)^{n-2}\le W_2(K)^{n-1}.
\end{equation}
By \eqref{e38} and \eqref{e40},
\begin{align*}%\label{e41}
\int_{(0,\infty)}dt\, (P(K_t))&^{-1/(p-1)}\nonumber\\&
\ge\frac{W_1(K)^{(p-2)/(p-1)}}{W_2(K)}\int_{(0,\infty)}d\theta \Big(\sum_{n=1}^d n\binom{d}{n} \theta^{n-1}\Big)^{-1/(p-1)}\nonumber\\&
=\frac{W_1(K)^{(p-2)/(p-1)}}{W_2(K)}d^{-1/(p-1)}\int_{(0,\infty)}d\theta \,(\theta+1)^{-(d-1)/(p-1)}\nonumber\\&
=\frac{W_1(K)^{(p-2)/(p-1)}}{W_2(K)}d^{-1/(p-1)}\frac{p-1}{d-p}\nonumber\\&
=\frac{p-1}{d-p}\frac{P(K)^{(p-2)/(p-1)}}{\int_{\partial K} Hd\mathcal{H}^{d-1}},
\end{align*}
where we have used \eqref{e35} in the last equality. This proves Theorem \ref{the1}(iv) by \eqref{e32}.
\hspace*{\fill }$\square $

\medskip

\noindent{\it Proof of Theorem \ref{the2}.} By \eqref{e32}, \eqref{e36} and the change of variable
\begin{equation*}% \label{e42}
t=\frac{W_0(K)}{W_1(K)}\theta
\end{equation*}
we obtain that
\begin{align}\label{e43}
&\int_{(0,\infty)}dt\, P(K_t)^{-1/(p-1)}\nonumber\\&=\frac{W_0(K)}{W_1(K)^{p/(p-1)}}\int_{(0,\infty)}d\theta \Big(\sum_{n=1}^d n\binom{d}{n} \frac{W_n(K)W_0(K)^{n-1}}{W_1(K)^n}\theta^{n-1}\Big)^{-1/(p-1)}.
\end{align}
Applying \eqref{e39} with $j=1,\,i=0,\,k=n$, and using that $p=d-1$ gives by \eqref{e43} that
\begin{align}\label{e44}
\int_{(0,\infty)}dt\, P(K_t)^{-1/(d-2)}&\ge\frac{W_0(K)}{W_1(K)^{(d-1)/(d-2)}d^{1/(d-2)}}\nonumber\\&\times\int_{(0,\infty)}d\theta \Big((1+\theta)^{d-1}-\theta^{d-1}+I(K)\theta^{d-1}\Big)^{-1/(d-2)}.
\end{align}
To bound the integral in the right-hand side of \eqref{e44} from below we use
\begin{equation*}%\label{e45}
(1+\theta)^{d-1}-\theta^{d-1}\le (d-1)(1+\theta)^{d-2},\,\theta\ge 0,
\end{equation*}
and
\begin{equation*}%\label{e46}
I(K)\theta^{d-1}\le (d-1)I(K)(1+\theta)^{d-1},\,\theta\ge 0.
\end{equation*}
This gives, by \eqref{e44} and the inequality $(x+y)^{\alpha}\le x^{\alpha}+y^{\alpha},\,x\ge 0,\,y\ge 0,\newline\,0<\alpha\le 1$, that
\begin{align}\label{e47}
\int_{(0,\infty)}dt\, P(K_t)&^{-1/(d-2)}\ge\frac{W_0(K)}{W_1(K)^{(d-1)/(d-2)}(d(d-1))^{1/(d-2)}}\nonumber\\&\,\,\times\int_{(0,\infty)}d\theta \Big((1+\theta)^{d-2}+I(K)(1+\theta)^{d-1}\Big)^{-1/(d-2)}\nonumber\\&
\ge \frac{W_0(K)}{W_1(K)^{(d-1)/(d-2)}(d(d-1))^{1/(d-2)}}\nonumber\\&\,\,\times\int_{(0,\infty)}d\theta \Big(1+\theta+I(K)^{1/(d-2)}(1+\theta)^{(d-1)/(d-2)}\Big)^{-1}.
\end{align}
We restrict the interval of integration to those values of $\theta$ for which \newline $I(K)^{1/(d-2)}(1+\theta)^{(d-1)/(d-2)}\le 1+\theta$. That is  $0\le\theta\le \theta^*$, where
\begin{equation}\label{e48}
\theta^*=I(K)^{-1}-1.
\end{equation}
It follows that
\begin{align}\label{e49}
\int_{(0,\infty)}&d\theta \Big(1+\theta+I(K)(1+\theta)^{(d-1)/(d-2)}\Big)^{-1}\nonumber\\&\ge \int_{(0,\theta^*)}d\theta \Big(1+\theta+I(K)(1+\theta)^{(d-1)/(d-2)}\Big)^{-1}\nonumber\\&
\ge \frac12\int_{(0,\theta^*)}d\theta (1+\theta)^{-1}\nonumber\\&
=\frac{1}{2}\log (I(K)^{-1}).
\end{align}
Theorem \ref{the2} follows from \eqref{e32}, \eqref{e47}, \eqref{e48} and \eqref{e49}.
\hspace*{\fill }$\square $

\section{Maximisation of $G_{p,q,r,\alpha}(\Omega)$}\label{sec3}
In this section we obtain results for the  maximisation of the functional  $G_{p,q,r,\alpha}$, defined in \eqref{e5}, in the class of non-empty, open, bounded and convex sets in  $\R^d$.
This  type of problem has been studied in \cite{vdB1}, \cite{MB} and \cite{MM} in the special case
$p=q=2$,  $\beta=0$ and $\alpha=\alpha^*$, with $\alpha^*$ defined in \eqref{e7}.  Our first result concerns  the maximisation problem of  $G_{p,q,r,\alpha}(\Omega)$ in the class of non-empty, open, bounded and convex sets in $\R^d$.
\begin{theorem}\label{the_new1}
   Let $d=2,3,...,$ $1<p<d$, $q>1$ and let $\alpha\ge0,\,\beta \ge0$ as in \eqref{e6}. Let $C_2$ be defined by
\begin{equation}
\label{c_2}
C_2=\begin{cases}
\frac{1}{d^{r(q'-1)}(d+q')^r},\,\, \, r\le0,\\
\frac{d^{q'r}}{(q'+1)^r}\hspace{11mm},\,\, r>0.
\end{cases}
\end{equation}

Then the following statements hold:
\begin{itemize}
\item[\textup{(i)}] If
\begin{equation*}%\label{e50}
q'r\ge \begin{cases}p-\beta,&1<p<  d-1,\\
&\\
(d-1)(d-p)-\beta,& d-1\le p<d,
\end{cases}
\end{equation*}
then
\begin{align}\label{e51}
&\sup \{G_{p,q,r,\alpha}(\Om):\,\Om\, \textup{non-empty, open,  bounded, convex in}\,\R^d\ \}\nonumber\\&\le \begin{cases}C_2\left(d+q'\right)^rd^{(2-\frac1d)(q'r+\beta-p)-r}G_{p,q,r,\alpha}(B_1),\,1<p< d-1,\\ \\
C_2(d+q')^rd^{2q'r+2\beta+d-p-r-\frac{1}{d}(q'r+\beta+d-p)}G_{p,q,r,\alpha}(B_1),
 d-1\le p<d.
\end{cases}
\end{align}
\item[\textup{(ii)}]If
\begin{equation}\label{e52}
q'r> \begin{cases}p-\beta &,1<p< d
-1,\\
&\\
(d-1)(d-p)-\beta&, d-1\le p<d,
\end{cases}
\end{equation}
then the variational problem in the left-hand side of \eqref{e51} has a maximiser. Any such  maximiser, denoted by $\Omega^*$, satisfies
\begin{align}\label{e53}
&\frac{2\rho(\Omega^*)}{\textup{diam}(\Omega^*)}\nonumber\\&\ge \begin{cases} d^{1-2d+\frac{dr}{q'r+\beta-p}}\big(C_2(q'+d)^r\big)^{-\frac{d}{q'r-p+\beta}},\hspace{16mm},1<p< d-1,\\ \\
d^{1+\frac{d(r-2q'r-2\beta)}{q'r+\beta-(d-p)(d-1)}}\big(C_2(d+q')^r\big)^{-\frac{d}{q'r+\beta-(d-p)(d-1)}},\,d-1\le p<d.
\end{cases}
\end{align}
\item[\textup{(iii)}] If $r>0$ and $\beta \ge p$, then
\begin{align*}%\label{e81}
\sup\{G_{p,q,r,\alpha}(\Om): \Om\, &\textup{non-empty, open, bounded, and convex in}\,\R^d\}\nonumber\\&\hspace{40mm}=G_{p,q,r,\alpha}(B_1),
\end{align*}
and the maximiser $B_1$ is unique (up to homotheties).
\\
\item[\textup{(iv)}]  If $q'r=p=d-1,\,\alpha=\alpha^*$, and if $d=3,4,...$, then the variational problem in the left-hand side of \eqref{e51} has a maximiser. Any such maximiser, denoted by $\Omega^*$
satisfies
\begin{equation}\label{e54}
\frac{2\rho(\Omega^*)}{\textup{diam}(\Omega^*)}\ge  d^{1-2d}e^{-2(d-2)d^{\frac{d-1-r}{d-2}}(d-1)^{\frac{1}{d-2}}\big(\frac{q'+d}{q'+1}\big)^{\frac{r}{d-2}}}.
\end{equation}
\end{itemize}
\end{theorem}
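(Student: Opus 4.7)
The strategy is to reduce all four parts to the combination of three ingredients: the capacity bound \eqref{e17}, sharp two-sided inradius bounds for the $q$-torsion of a convex set, and the convex-geometric inequality $P(\Omega)\rho(\Omega)\le d|\Omega|$ (equality precisely for balls). For convex bounded $\Omega$ one has
\begin{equation*}
c_1(d,q)\,\rho(\Omega)^{q'}|\Omega|\;\le\;T_q(\Omega)\;\le\;c_2(d,q)\,\rho(\Omega)^{q'}|\Omega|,
\end{equation*}
with equality on the upper side when $\Omega$ is a ball. Raising to the power $r$ and selecting the side of the inequality according to the sign of $r$ reproduces the constant $C_2$ of \eqref{c_2}.

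For part (i) I would plug \eqref{e17} into the capacity term of $G_{p,q,r,\alpha}(\Omega)$ and the appropriate one-sided torsion bound into $T_q(\Omega)^r$. After applying the constraint \eqref{e6} the result factorises as $G_{p,q,r,\alpha}(B_1)$ times a product $\big(P(\Omega)\rho(\Omega)/(d|\Omega|)\big)^{\gamma_1}\,(\rho(\Omega)/R_\Omega)^{\gamma_2}$, with $R_\Omega$ the appropriate length scale: $|\Omega|^{1/d}$ when $1<p<d-1$, and $P(\Omega)^{1/(d-1)}$ when $d-1\le p<d$. This dichotomy is exactly the one appearing in \eqref{e51} and reflects which of the two natural length scales is dominated more sharply by $\rho(\Omega)$. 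Both factors are then bounded by explicit dimensional constants via the inclusion $B_{\rho(\Omega)}\subset\Omega$ and the inequality $P(\Omega)\rho(\Omega)\le d|\Omega|$, giving \eqref{e51}. Part (iii) is the clean subcase in which the hypothesis $r>0,\beta\ge p$ makes $\gamma_1$ strictly positive and $\gamma_2=0$, so $G_{p,q,r,\alpha}(\Omega)\le G_{p,q,r,\alpha}(B_1)$ with equality exactly for balls by the rigidity of $P\rho\le d|\Omega|$.

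For part (ii), the strict inequality in \eqref{e52} makes $\gamma_2$ strictly positive, so any maximising sequence $\Omega_n$ normalised by $\textup{diam}(\Omega_n)=1$ must keep $\rho(\Omega_n)/\textup{diam}(\Omega_n)$ bounded away from zero, since otherwise the estimate of (i) would drive $G(\Omega_n)$ to zero. Blaschke precompactness plus Hausdorff continuity of perimeter, volume, $p$-capacity and $q$-torsion along uniformly fat convex sequences then produces a maximiser $\Omega^*$. The quantitative bound \eqref{e53} follows by re-applying the part (i) estimate to $\Omega^*$, using the trivial lower bound $G_{p,q,r,\alpha}(\Omega^*)\ge G_{p,q,r,\alpha}(B_1)$, and solving for $\rho(\Omega^*)/\textup{diam}(\Omega^*)$.

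The main obstacle is part (iv), where $q'r=p=d-1$ forces $\gamma_2=0$ and the above compactness argument no longer controls elongation. The remedy is to replace \eqref{e17} by Theorem \ref{the2}, which contributes an additional factor $(\log I(\Omega)^{-1})^{2-d}$. Since $I(\Omega)$ is controlled above by a dimensional constant times a positive power of $\rho(\Omega)/\textup{diam}(\Omega)$ on convex sets (via $|\Omega|\le\omega_{d-1}\rho(\Omega)^{d-1}\textup{diam}(\Omega)$ and $P(\Omega)\ge c(d)\textup{diam}(\Omega)^{d-1}$), the part (ii) argument can be rerun with $\big(\log(\textup{diam}(\Omega)/\rho(\Omega))\big)^{2-d}$ playing the role of the missing $(\rho/\textup{diam})^{\gamma_2}$. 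Because $2-d<0$, this still forces $\rho(\Omega^*)/\textup{diam}(\Omega^*)$ to be bounded below, now only on a logarithmic scale, producing the exponentially small bound \eqref{e54}; existence then follows from the same Blaschke-compactness argument as in part (ii).
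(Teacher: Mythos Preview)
The most serious gap is in part (iii). Your inradius torsion bound $T_q(\Omega)\le c_2\,\rho(\Omega)^{q'}|\Omega|$ is \emph{not} an equality for balls: for $B_1$ the ratio $T_q/(\rho^{q'}|\Omega|)$ equals the \emph{lower} constant $c_1=1/(d^{q'-1}(d+q'))$, while the upper constant $c_2=1/(q'+1)$ is attained only asymptotically for thin cuboids (see \eqref{e56}). Hence plugging your bound together with \eqref{e17} produces a strict inequality already on balls and cannot identify $B_1$ as maximiser. Moreover your factorisation with $R_\Omega=|\Omega|^{1/d}$ gives $\gamma_1=p-\beta$ and $\gamma_2=q'r-p+\beta$, so under the hypotheses $r>0$, $\beta\ge p$ one has $\gamma_1\le 0$ and $\gamma_2>0$, the opposite of what you assert. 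The paper's proof of (iii) abandons the inradius bound altogether and uses the Saint-Venant/Talenti rearrangement inequality $T_q(\Omega)\le T_q(\Omega^\sharp)$, which \emph{does} have equality precisely for balls, combined with \eqref{e17} and the isoperimetric inequality.

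There is a second gap in the $d-1\le p<d$ case of part (i). Using only \eqref{e17} leads, after the torsion bound and \eqref{e6}, to a factor $(|\Omega|^{(d-1)/d}/P(\Omega))^{q'r+\beta-p}$; this is bounded above only when $q'r+\beta\ge p$, not under the weaker hypothesis $q'r+\beta\ge(d-1)(d-p)$ that the second case allows. Switching the length scale $R_\Omega$ to $P(\Omega)^{1/(d-1)}$ does not help: the resulting $\gamma_2$ is still a positive multiple of $q'r+\beta-p$. The paper handles this case with a \emph{different} capacity estimate, namely $\cp_p(\overline{\Omega})\le\cp_p(\overline{B_{\textup{diam}(\Omega)}})$, together with $\textup{diam}(\Omega)\le c_d\,P(\Omega)\rho(\Omega)^{2-d}$ obtained via John's ellipsoid (Lemma~\ref{lem1}). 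Finally, in part (iv) your claimed intermediate bounds $|\Omega|\le\omega_{d-1}\rho^{d-1}\textup{diam}$ and $P(\Omega)\ge c(d)\,\textup{diam}^{d-1}$ are both false (test on flat and on elongated ellipsoids, respectively); the correct control $I(\Omega)\le C\,\rho(\Omega)/\textup{diam}(\Omega)$ again comes from the John-ellipsoid inequality \eqref{e61a}, after which your compactness argument goes through as in the paper.
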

Statement (iii) above asserts that if the power of the perimeter term in the denominator is not too small, then a ball is the unique maximiser in the class of non-empty, open, bounded,  convex sets in $\R^d$. In the special case where $p=2, q=2, r=1$ and $d\ge 3$ we obtain  with \eqref{e6} $\alpha\le \frac2d$. This jibes with \cite[Theorem 3]{vdB1}.

In order to give the proofs of both Theorem  \ref{the_new1} and Theorem  \ref{the_min} (in Section \ref{sec4}) we have the following bounds for the $q$-torsion.
\begin{lemma}\label{lem0}
If $\Omega\subset\R^d$ is an open, non-empty and convex set with finite measure, and if $q'>1,r\in\R$, then
\begin{equation}\label{e54a}
T_q(\Om)^r\le C_2\frac{|\Om|^{(1+q')r}}{P(\Om)^{q'r}},
\end{equation}
where $C_2$ is given by \eqref{c_2},
and
\begin{equation}\label{e54b}
T_q(\Om)^r\ge C_1\frac{|\Om|^{(1+q')r}}{P(\Om)^{q'r}},
\end{equation}
where $C_1$ is given by \eqref{c1}.
\end{lemma}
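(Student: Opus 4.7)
The plan is to establish the two-sided bound on $T_q$ for convex $\Om \subset \R^d$,
$$c_1 \frac{|\Om|^{1+q'}}{P(\Om)^{q'}} \le T_q(\Om) \le c_2 \frac{|\Om|^{1+q'}}{P(\Om)^{q'}},$$
with $c_1 = 1/(d^{q'-1}(d+q'))$ and $c_2 = d^{q'}/(q'+1)$, and then raise both inequalities to the $r$-th power. Since $t \mapsto t^r$ is monotone increasing for $r > 0$ and decreasing for $r \le 0$, the constants $c_1$ and $c_2$ swap roles depending on the sign of $r$, which is exactly the case distinction in \eqref{c_2} (and the analogous formula for $C_1$).

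For the upper bound I would use the pointwise comparison $u_\Om(x) \le u_{1D}(d_\Om(x))$, where $u_{1D}(s) = [\rho(\Om)^{q'} - (\rho(\Om) - s)^{q'}]/q'$ is the one-dimensional torsion profile of an interval of length $2\rho(\Om)$. This follows from the comparison principle applied to $v(x) := u_{1D}(d_\Om(x))$: a direct computation gives $-\Delta_q v = 1 - (\rho - d_\Om)\Delta d_\Om \ge 1$ in $\Om$ since $d_\Om$ is concave on convex $\Om$, while $v = 0$ on $\partial\Om$. Integrating via the coarea formula and applying Chebyshev's inequality for oppositely-monotone functions (with $u_{1D}$ increasing, $P(\Om_t)$ decreasing in $t$ for convex $\Om$, and $u_{1D}(t) - \rho^{q'}/(q'+1)$ having mean zero on $[0, \rho]$) yields $T_q(\Om) \le \rho(\Om)^{q'}|\Om|/(q'+1)$. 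Combining with the convex inradius-perimeter bound $\rho(\Om) P(\Om) \le d |\Om|$ --- proved by the divergence theorem applied to $(x - x_0)/d$ at the incentre $x_0$ --- produces $T_q(\Om) \le (d^{q'}/(q'+1)) |\Om|^{1+q'}/P(\Om)^{q'}$.

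For the lower bound, test the variational characterisation \eqref{e3} with $\psi(x) = \rho(\Om)^{q'} - (\rho(\Om) - d_\Om(x))^{q'}$; this is admissible in $W_0^{1,q}(\Om)$ and coincides (up to a scalar) with the torsion function of the inscribed ball $B_{\rho(\Om)}$. Direct computation gives $\int_\Om |D\psi|^q = (q')^q B$ and $\int_\Om \psi = A - B$, where $A = \rho^{q'}|\Om|$ and $B = \int_\Om (\rho - d_\Om)^{q'}\,dx$, so that the variational bound yields $T_q(\Om) \ge A(1-\beta)^{q'}/((q')^{q'}\beta^{q'-1})$ with $\beta = B/A$. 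The Brunn-Minkowski concavity of $t \mapsto |\Om_t|^{1/d}$ on $[0, \rho(\Om)]$ for convex $\Om$ implies $|\{d_\Om > s\}| \ge |\Om|(1 - s/\rho(\Om))^d$, so that $d_\Om/\rho(\Om)$ is stochastically larger than a random variable with density $d(1-u)^{d-1}$ on $[0,1]$; consequently $\beta = |\Om|^{-1}\int_\Om (1 - d_\Om/\rho)^{q'}\,dx \le d/(d+q')$. Since $\beta \mapsto (1-\beta)^{q'}/\beta^{q'-1}$ is decreasing on $(0,1)$, this gives $T_q(\Om) \ge \rho(\Om)^{q'}|\Om|/(d^{q'-1}(d+q'))$, and the dual convex inequality $|\Om| \le \rho(\Om) P(\Om)$ (immediate from $|\Om| = \int_0^{\rho} P(\Om_t)\,dt$ and $P(\Om_t) \le P(\Om)$) then produces the claimed $c_1 |\Om|^{1+q'}/P(\Om)^{q'}$.

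The main obstacle will be the clean verification of the Brunn-Minkowski ingredients in the right sense on general convex sets --- the distributional concavity of $d_\Om$ giving $\Delta d_\Om \le 0$ for the comparison step in the upper bound, and the concavity of $t \mapsto |\Om_t|^{1/d}$ on $[0, \rho(\Om)]$ underlying the stochastic-dominance step in the lower bound. Once these are in place the remainder is routine algebraic bookkeeping using $(q'-1)q = q'$ and $(q'-1)(q-1) = 1$, together with two applications of Chebyshev's inequality for monotone functions; the sharpness of both bounds is witnessed by the ball (for $c_1$) and by 1D intervals (for $c_2$).
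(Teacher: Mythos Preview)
Your proposal is correct and follows essentially the same route as the paper: the paper's proof simply quotes the two-sided bound $\frac{1}{d^{q'-1}(d+q')}\le \frac{T_q(\Omega)}{|\Omega|\rho(\Omega)^{q'}}\le \frac{1}{q'+1}$ from \cite{JDE} and the bound $\frac{1}{\rho(\Omega)}\le \frac{P(\Omega)}{|\Omega|}\le \frac{d}{\rho(\Omega)}$ from \cite{Anona}, then combines them and raises to the $r$-th power, handling the sign of $r$ exactly as you do. The only difference is that you supply self-contained proofs of these two quoted ingredients (via the comparison principle/Chebyshev argument and the Brunn--Minkowski/stochastic-dominance argument), whereas the paper just cites them; your derivations are correct, so the proposal is a more detailed version of the same proof.
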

\begin{proof}
It was shown in \cite{JDE} that
\begin{equation}\label{e56}
 \displaystyle \frac{1}{d^{q'-1}(d+q')}\le\frac{T_q(\Omega)}{|\Omega|\rho(\Omega)^{q'}}\le \frac{1}{q'+1},
\end{equation}
where the equality in the upper bound is asymptotically sharp for  a sequence of thinning cuboids. The equality in the lower bound holds for balls.

It was shown in \cite{Anona} that if
 $\Omega\subset \R^d$ is  a non-empty, open, bounded, and convex set, then
\begin{equation}
\label{e59}
\frac{1}{\rho(\Omega)} \le \displaystyle \frac{P(\Omega)}{|\Omega|} \le \displaystyle \frac{d}{\rho(\Omega)},
\end{equation}
with equality in the right-hand side holds if and only if the inner parallel set of $\Om$ is a homothety of $\Om$ with respect to the centre of the inball. The left-hand side is asymptotically sharp for thinning cuboids.
The upper bound in \eqref{e54a} follows for $r>0$ by the upper bounds in \eqref{e56} and \eqref{e59}. The upper bound in \eqref{e54a} follows for $r\le0$ by the lower bound in \eqref{e56} and the lower bound in \eqref{e59}.

The lower bound in \eqref{e54b} for $r>0$ follows from the lower bound in \eqref{e56} and the first inequality in \eqref{e59}. The lower bound for $r<0$ follows from  the upper bound in \eqref{e56} and the second inequality in \eqref{e59}.
\end{proof}

\smallskip

\begin{lemma}\label{lem1} If $\Omega$ is a non-empty, open, bounded, and convex set in $\R^d,\,d\ge 2$, then
\begin{equation}\label{e61a}
\frac{|\Om|^{(d-1)/d}}{P(\Om)}\le \frac{d^{(d-1)/d}}{\omega^{1/d}_d}\Big(\frac{2\rho(\Om)}{\textup{diam}(\Om)}\Big)^{1/d},
\end{equation}
\begin{equation}\label{e61d}
\textup{diam}(\Om)\le \frac{2d^{d-1}}{\omega_d}\frac{P(\Om)^{d-1}}{|\Om|^{d-2}},
\end{equation}
and
\begin{equation}\label{e61aa}
\frac{|\Om|^{(d-1)/d}}{P(\Om)}\ge \frac{1}{d\omega_d^{1/d}}\frac{2\rho(\Om)}{\textup{diam}(\Om)}.
\end{equation}
\end{lemma}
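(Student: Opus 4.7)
The three inequalities will follow from combining the two-sided estimate \eqref{e59} with a single additional geometric input per inequality: the classical isodiametric inequality $|\Omega|\le\omega_d(\textup{diam}(\Omega)/2)^d$ for \eqref{e61aa}, and a Brunn--Minkowski cross-section volume bound for \eqref{e61a} and \eqref{e61d}. I would dispose of \eqref{e61aa} first because it is the shortest.

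For \eqref{e61aa}, chain the right-hand inequality $\rho(\Omega)P(\Omega)\le d|\Omega|$ of \eqref{e59} with the isodiametric estimate $|\Omega|^{1/d}\le\omega_d^{1/d}\textup{diam}(\Omega)/2$. Writing $|\Omega|=|\Omega|^{1/d}|\Omega|^{(d-1)/d}$ and substituting yields
\begin{equation*}
2\rho(\Omega)P(\Omega)\le 2d|\Omega|\le d\omega_d^{1/d}\textup{diam}(\Omega)\,|\Omega|^{(d-1)/d},
\end{equation*}
which is \eqref{e61aa} after rearrangement.

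For \eqref{e61a} and \eqref{e61d}, the key step is the volume lower bound $|\Omega|\ge \omega_{d-1}\rho(\Omega)^{d-1}\textup{diam}(\Omega)/d$. To establish it I would pick $x_1, x_2\in\overline{\Omega}$ realising the diameter $D:=\textup{diam}(\Omega)$ and an inscribed ball $B(c,\rho)\subset\Omega$ with $\rho:=\rho(\Omega)$. Parametrising the line through $x_1$ and $x_2$ by $t\in\mathbb{R}$, the diameter constraint forces the projection of $\Omega$ onto this line to equal exactly $[0,D]$. Let $V(t)$ denote the $(d-1)$-dimensional measure of the cross-section of $\Omega$ at parameter $t$. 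By Brunn--Minkowski, $V^{1/(d-1)}$ is concave on $[0,D]$; at $c_1$, the first coordinate of $c$, the cross-section contains the central slice of the inball, so $V(c_1)\ge\omega_{d-1}\rho^{d-1}$; and $V\ge 0$ at the endpoints. A tent-function lower bound on the concave function $V^{1/(d-1)}$ followed by an elementary integration over $[0,D]$ delivers the claimed volume estimate.

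To finish \eqref{e61a}, combine this volume bound with the left-hand inequality $P(\Omega)\ge|\Omega|/\rho(\Omega)$ of \eqref{e59}:
\begin{equation*}
\frac{|\Omega|^{(d-1)/d}}{P(\Omega)}\le \frac{\rho}{|\Omega|^{1/d}}\le \left(\frac{d\rho}{\omega_{d-1}D}\right)^{1/d},
\end{equation*}
and then invoke the elementary inequality $\omega_d\le 2d^{d-2}\omega_{d-1}$ (valid for $d\ge 2$, since $\omega_d\le 2\omega_{d-1}$ by integrating the slab representation of the unit ball) to convert the right-hand side into the form on the right of \eqref{e61a}. For \eqref{e61d}, the same volume bound rearranges to $D\le 2d^{d-1}|\Omega|/(\omega_d\rho^{d-1})$ after using $\omega_d\le 2d^{d-2}\omega_{d-1}$ again; substituting $\rho^{d-1}\ge|\Omega|^{d-1}/P(\Omega)^{d-1}$ from \eqref{e59} then produces exactly \eqref{e61d}. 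The substantive step throughout is the Brunn--Minkowski volume bound: the naive convex-hull cone estimate only delivers $\omega_{d-1}\rho^{d-1}D/(2d)$, which is insufficient already in dimension two, so the concavity of $V^{1/(d-1)}$ (rather than a single cone) is what really makes the argument go through.
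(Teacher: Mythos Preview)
Your argument is correct. For \eqref{e61aa} you and the paper proceed identically, chaining the upper bound in \eqref{e59} with the isodiametric inequality. For \eqref{e61a} and \eqref{e61d}, however, the routes diverge. The paper's key geometric input is a \emph{perimeter} lower bound
\[
P(\Omega)\ge \frac{\omega_d}{2d^{d-1}}\,\textup{diam}(\Omega)\,\rho(\Omega)^{d-2},
\]
obtained from John's ellipsoid: one writes $E(a)\subset\Omega\subset E(da)$, uses monotonicity of perimeter under inclusion for convex sets, and bounds $P(E(a))\ge |E(a)|/\rho(E(a))=\omega_d\prod a_i\ge \omega_d a_1 a_d^{d-2}$. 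You instead establish a \emph{volume} lower bound $|\Omega|\ge \omega_{d-1}\rho(\Omega)^{d-1}\textup{diam}(\Omega)/d$ via Brunn--Minkowski concavity of cross-sections along the diameter direction. Both are then fed into \eqref{e59} to reach the stated inequalities. Your intermediate estimate is in fact sharper: combined with $|\Omega|\le\rho P(\Omega)$ it yields $P(\Omega)\ge(\omega_{d-1}/d)\rho^{d-2}D$, which beats the paper's constant by the factor $2d^{d-2}\omega_{d-1}/\omega_d\ge 1$, but you then give this factor back via $\omega_d\le 2d^{d-2}\omega_{d-1}$ to match the stated form. The John-ellipsoid argument is slightly more self-contained (no integration), while your cross-section argument is more direct and produces a tighter volume inequality along the way.
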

\begin{proof}
We have by the lower bound in \eqref{e59} that for $d\ge 2$,
\begin{equation}\label{e61b}
\frac{|\Om|}{P(\Om)^{d/(d-1)}}\le\frac{\rho(\Om)}{P(\Om)^{1/(d-1)}}.
\end{equation}
Let $E(a)$ be the John's ellipsoid for $\Omega$ with semi-axes $a_1,...,a_d$. Without loss of generality we may assume that $a_1 \ge a_2 \ge \cdots \ge a_d$, and that $0$ is the centre of $E(a)$.
Then $E(a)\subset \Omega\subset E(da)$. It follows that $a_d\le \rho(\Omega)\le da_d$, and that $2a_1\le \textup{diam}(\Omega)\le 2da_1$. By monotonicity of the perimeter under inclusion on the class of convex sets we have by the lower bound in \eqref{e59} that
\begin{align}\label{e61}
P(\Om)&\ge P(E(a))\ge \frac{|E(a)|}{\rho(E(a))}=\omega_d\Pi_{i=1}^{d-1}a_i\ge \omega_da_1a_d^{d-2}\nonumber\\&
\ge \frac{\omega_d}{2}\textup{diam}(E(a))\rho(E(a))^{d-2}\nonumber\\&
\ge \frac{\omega_d}{2d^{d-1}}\textup{diam}(\Om)\rho(\Om)^{d-2}.
\end{align}
By \eqref{e61b} and \eqref{e61},
\begin{equation*}%\label{e61c}
\frac{|\Om|}{P(\Om)^{d/(d-1)}}\le\frac{d}{\omega_d^{1/(d-1)}}\Big(\frac{2\rho(\Om)}{\textup{diam}(\Om)}\Big)^{1/(d-1)}.
\end{equation*}
This implies \eqref{e61a}.

To prove \eqref{e61d} we use \eqref{e61} to obtain that
\begin{equation}\label{e61e}
\textup{diam}(\Om)\le \frac{2d^{d-1}}{\omega_d}P(\Om)\rho(\Om)^{2-d},
\end{equation}
and use the lower bound for $\rho $ in \eqref{e59}. This yields \eqref{e61d}.

To prove \eqref{e61aa} we note that by \eqref{e59}
\begin{equation}\label{e61ab}
\frac{|\Om|}{P(\Om)}\ge \frac{\rho(\Om)}{d}.
\end{equation}
The isodiametric inequality \cite[p.69]{EG} reads
$$ |\Om|\le \omega_d\Big(\frac{\textup{diam}(\Om)}{2}\Big)^d.$$
It follows that
\begin{equation}\label{e61ac}
|\Om|^{-1/d}\ge 2\omega_d^{-1/d}\textup{diam}(\Om)^{-1}.
\end{equation}
Inequality \eqref{e61aa} follows by \eqref{e61ab} and \eqref{e61ac}.
\end{proof}

\smallskip

\begin{proof}[Proof of Theorem \ref{the_new1}]

To prove the first case under (i) we let $q'r\ge p-\beta$. By \eqref{e5}, \eqref{e17} and  \eqref{e54a} we get that
\begin{align*}%\label{e62}
G_{p,q,r,\alpha}(\Om)&\le \Big(\frac{d-p}{d(p-1)}\Big)^{p-1}C_2 \frac{P(\Om)^{p}}{|\Om|^{p-1}}\frac{|\Om|^{(1+q')r-\alpha}}{P(\Om)^{\beta+q'r}}\nonumber\\&
=C_2\Big(\frac{d-p}{d(p-1)}\Big)^{p-1}\Big(\frac{|\Om|^{(d-1)/d}}{P(\Om)}\Big)^{\beta+q'r-p},
\end{align*}
where we have used \eqref{e6} to eliminate $\alpha$. Since $\beta+q'r-p\ge 0$ we  may use \eqref{e61a} to obtain that
\begin{equation}\label{e63}
G_{p,q,r,\alpha}(\Om)\le C_2\,\Big(\frac{d-p}{d(p-1)}\Big)^{p-1}\Big(\frac{d^{d-1}}{\omega_d}\Big)^{\frac1d(q'r-p +\beta)}\Big(\frac{2\rho(\Om)}{\textup{diam}(\Om)}\Big)^{\frac1d(q'r-p +\beta)}.
\end{equation}

A straightforward computation using \eqref{e56} with $\Om=B_1$, and \eqref{e17} with $K=\overline{B_1}$ yields
\begin{equation}\label{e64}
 G_{p,q,r,\alpha}(B_1)= \Big(\frac{d-p}{p-1}\Big)^{p-1}  \frac{d^{1+r-q'r -\beta}}{(d+q')^r}\omega_d^{\frac1d(p-q'r -\beta )}.
\end{equation}
By \eqref{e63} and \eqref{e64},
\begin{equation}\label{e65}
G_{p,q,r,\alpha}(\Om)\le C_2\left(d+q'\right)^rd^{(2-\frac1d)(q'r+\beta-p)-r}\Big(\frac{2\rho(\Om)}{\textup{diam}(\Om)}\Big)^{\frac1d(q'r-p+\beta)}G_{p,q,r,\alpha}(B_1).
\end{equation}
This proves the first case under (i) since $2\rho(\Om)\le \textup{diam}(\Om)$.

\smallskip

To prove the second case under (i) we let  $q'r\ge (d-1)(d-p)-\beta$. We first note that $\Om$ is contained in a ball with radius $\textup{diam}(\Om)$.
Using \eqref{e17} once more with $K$ the closure of a ball with radius $\textup{diam}(\Om)$
we obtain that
\begin{equation}\label{e66}
\cp_p(\overline{\Om})\le \Big(\frac{d-p}{p-1}\Big)^{p-1}d\omega_d\textup{diam}(\Om)^{d-p}.
\end{equation}
By \eqref{e66} and  \eqref{e54a} we get
\begin{align*}%\label{e67}
G_{p,q,r,\alpha}(\Om)&\le \Big(\frac{d-p}{p-1}\Big)^{p-1}C_2\,d\omega_d\frac{\textup{diam}(\Om)^{d-p}|\Om|^{(1+q')r-\alpha}}{P(\Om)^{q'r+\beta}}\nonumber\\&
\le \Big(\frac{d-p}{p-1}\Big)^{p-1}C_2\,d\omega_d\Big(\frac{2d^{d-1}}{\omega_d}\Big)^{d-p}\Big(\frac{|\Om|^{(d-1)/d}}{P(\Om)}\Big)^{q'r+\beta-(d-1)(d-p)},
\end{align*}
where we have used \eqref{e61e} in the last inequality, and \eqref{e6} to eliminate $\alpha$.

Since $q'r\ge (d-1)(d-p)-\beta$ we use \eqref{e61a} to obtain that

\begin{align}\label{e68}
 G_{p,q,r,\alpha }(\Om)&\le \Big(\frac{d-p}{p-1}\Big)^{p-1}C_2\,d\omega_d\Big(\frac{2d^{d-1}}{\omega_d}\Big)^{d-p}\frac{d^{\frac{d-1}{d}(q'r+\beta-(d-1)(d-p))}}{\omega^{\frac1d(q'r+\beta-(d-1)(d-p))}_d}\nonumber\\&
 \hspace{4mm}\times\Big(\frac{2\rho(\Om)}{\textup{diam}(\Om)}\Big)^{\frac1d(q'r+\beta-(d-1)(d-p))}\nonumber\\&
 =\Big(\frac{d-p}{p-1}\Big)^{p-1}C_2\frac{d^{\frac{d-1}{d}(q'r+\beta+d-p)+1}}{\omega_d^{\frac1d(q'r+\beta-p)}}\nonumber\\&\hspace{4mm}\times\Big(\frac{2\rho(\Om)}{\textup{diam}(\Om)}\Big)^{\frac1d(q'r+\beta-(d-1)(d-p))}.
\end{align}

By \eqref{e64} and \eqref{e68}
\begin{align*}%\label{e68a}
 G_{p,q,r,\alpha}(\Om)&\le C_2(d+q')^rd^{2q'r+2\beta+d-p-r-\frac1d(q'r+\beta+d-p)}\nonumber\\&\times\Big(\frac{2\rho(\Om)}{\textup{diam}(\Om)}\Big)^{\frac1d(q'r+\beta-(d-1)(d-p))}G_{p,q,r,\alpha}(B_1).
\end{align*}
This proves the second case under (i).

Note that the requirements $1<p<d-1$ and $d-1\le p<d$ respectively were not needed in the proof of (i). They were just included to distinguish between the two cases under (i). The second case, where $d-1\le p<d$, gives a less restrictive lower bound for $q'r$. For $q'r\ge p-\beta$ both upper bounds in \eqref{e51} hold.

\smallskip

To prove (ii) we can proceed as in \cite{MB}. First note that the supremum is finite by (i). Let $(\Om_n)$ be a maximising sequence. Without loss of generality we may assume that  $G_{p,q,r,\alpha}(\Om_n)>G_{p,q,r,\alpha}(B_1)$ and that $(G_{p,q,r,\alpha}(\Om_n))$  is a strictly increasing sequence, for otherwise $B_1$ is a maximiser, and there is nothing to prove. Let $\Om$ be an element of this maximising sequence. By \eqref{e65} or \eqref{e68}
$\textup{diam}(\Om)/\rho(\Om)$ is uniformly bounded. Since $G_{p,q,r,\alpha}(\Om)$ is invariant under homotheties we may fix $\rho(\Om)=1$. Then $\textup{diam}(\Om)$ is uniformly bounded. By convexity of the elements $\Om_n$ there exists a subsequence of translates of $(\Om_n)$ which converges in the Hausdorff metric to say $\Om_{p,q,r,\alpha}$. By continuity of $p$-capacity, $q$-torsion, and measure we have that $\lim G_{p,q,r,\alpha}(\Om_n)=G_{p,q,r,\alpha}(\Om_{p,q,r,\alpha}).$
Hence $\Om_{p,q,r,\alpha}$ is a maximiser. Since $G_{p,q,r,\alpha}(\Om_{p,q,r,\alpha})>G_{p,q,r,\alpha}(B_1)$, \eqref{e65} and \eqref{e68} imply \eqref{e53}. If $\Om_{p,q,r,\alpha}=B_1$ then it also satisfies \eqref{e53}. This proves the assertion under (ii).

 To prove (iii) we let $\Omega^{\sharp} $ be  a ball with $|\Omega^{\sharp}|=|\Omega|$. Then  (see for instance \cite{ta})
\begin{equation}\label{e82}
T_q(\Omega) \le T_q(\Omega^{\sharp})=\frac{1}{d^{q'}\omega_d^{\frac{q'}{d}}}\frac{d}{q'+d}|\Omega|^{\frac{q'}{d}+1}
\end{equation}
with equality if $\Om$ is a ball. Rewriting \eqref{e82} gives
\begin{equation}\label{e83}
T_q(\Omega) \le T_q(B_1)\Big(\frac{|\Omega|}{|B_1|}\Big)^{\frac{q'}{d}+1}.
\end{equation}
Rewriting \eqref{e17} gives
\begin{equation}\label{e84}
\cp_p(K)\le \cp_p(\overline{B_1})\Big(\frac{P(K)}{P(B_1)}\Big)^p\Big(\frac{|B_1|}{|K|}\Big)^{p-1}.
\end{equation}
By \eqref{e83} and \eqref{e84} we have for $1<p<d,\,q>1$, $r>0$  and $\Om$ convex
\begin{align*}%\label{e85}
G_{p,q,r,\alpha}&(\Om)\nonumber \\
&\le T_q(B_1)^r\cp_p(\overline{B_1})\Big(\frac{|\Omega|}{|B_1|}\Big)^{\frac{q'r}{d}+r}\Big(\frac{P(\Om)}{P(B_1)}\Big)^p\Big(\frac{|B_1|}{|\Om|}\Big)^{p-1}|\Om|^{-\alpha}P(\Om)^{-\beta}\nonumber\\&
=G_{p,q,r,\alpha}(B_1)\Big(\frac{|\Om|}{|B_1|}\Big)^{(\beta-p)(d-1)/d}\Big(\frac{P(B_1)}{P(\Om)}\Big)^{p-\beta}.
\end{align*}
By the isoperimetric inequality we have for any $\gamma\ge 0$,
\begin{equation*}%\label{e86}
\frac{|\Om|^{\gamma/d}}{P(\Om)^{\gamma/(d-1)}}\le \frac{|B_1|^{\gamma/d}}{P(B_1)^{\gamma/(d-1)}}.
\end{equation*}
Let
\begin{equation*}%\label{e87}
\gamma=(\beta-p)(d-1)
\end{equation*}
Then $\gamma\ge 0$ is equivalent to $\beta \ge p$, and this completes the proof of (iii).

To prove (iv) we obtain by Lemma \ref{lem0}, Theorem \ref{the2} and \eqref{e7} that for $q'r=p=d-1$,
\begin{equation}\label{e69}
G_{d-1,q,r,\alpha^*}(\Om)\le d(d-1)2^{d-2}(q'+1)^{-r}\Big(\log(I(\Om)^{-1})\Big)^{2-d},
\end{equation}
where we have used in \eqref{c_2} that $r>0$. By \eqref{e64}
\begin{equation} \label{e70}
G_{p,q,r,\alpha^*}(B_1)=(d(d-2))^{2-d}d^r(q'+d)^{-r}.
\end{equation}
By \eqref{e69} and \eqref{e70},
\begin{align}\label{e71}
G_{d-1,q,r,\alpha^*}(\Om)&\le d^{d-1-r}(d-1)(d-2)^{d-2}2^{d-2}\Big(\frac{q'+d}{q'+1}\Big)^r\nonumber\\& \,\,\ \times G_{d-1,q,r,\alpha^*}(B_1)\Big(\log(I(\Om)^{-1})\Big)^{2-d}.
\end{align}
By \eqref{e18} and \eqref{e61a}
\begin{equation}\label{e72}
I(\Om)\le d^{2d-1}\frac{2\rho(\Om)}{\textup{diam}(\Om)}.
\end{equation}
We now argue as under (ii) to conclude that either $B_1$ is a maximiser of $G_{d-1,q,r,\alpha^*}(\Om)$ among the convex sets or there exists a maximising sequence $(\Om_n)$ with
$(G_{d-1,q,r,\alpha^*}(\Om_n))$ increasing with limit strictly larger than  $G_{d-1,q,r,\alpha^*}(B_1)$. As the functional is scaling invariant we may fix $\rho(\Om_n)=1$.
This gives, by \eqref{e71} and \eqref{e72}, that $\textup{diam}(\Om_n)$ is uniformly bounded in $n$. By convergence and continuity we conclude that a maximiser, say $\Om^*$, exists and satisfies
\begin{equation}\label{e73}
G_{d-1,q,r,\alpha^*}(\Om^*)>G_{d-1,q,r,\alpha^*}(B_1).
\end{equation}
By \eqref{e71} and \eqref{e73},
\begin{align*}%\label{ext}
\Big(\log(I(\Om)^{-1})\Big)^{d-2}\le d^{d-1-r}(d-1)(d-2)^{d-2}2^{d-2}\Big(\frac{q'+d}{q'+1}\Big)^r.
\end{align*}
This implies the assertion under \eqref{e54}.
\end{proof}

\section{Minimisation of $G_{p,q,r,\alpha}(\Omega)$}\label{sec4}

In this section we obtain some results for the minimisation problem of $G_{p,q,r,\alpha}$.
\begin{theorem}\label{the_min}    Let $d=2,3,...,$ $1<p<d$, $q>1$ and let $\alpha\ge0,\,\beta \ge0$ as in \eqref{e6}. Let $C_1$ be defined by
\begin{equation}
\label{c1}
C_1=\begin{cases}
\frac{1}{d^{r(q'-1)}(d+q')^r}, \, r>0,\\
 \frac{1}{(q'+1)^r}, \, r\le 0.
\end{cases}
\end{equation}
Then  the following statements hold:
\begin{itemize}
\item[\textup{(i)}]If $q'r>\displaystyle\frac{d-p}{d-1}-\beta$, then
$$\inf\{G_{p,q,r,\alpha}(\Om): \Om\,\textup{non-empty, open, bounded, convex in $\R^d$} \}=0.$$
\item[\textup{(ii)}] If $q'r\le\displaystyle \frac{d-p}{d-1}-\beta$, then
\begin{align}\label{e74}
&\inf\{G_{p,q,r,\alpha}(\Om): \Om\,\textup{non-empty, open, bounded convex in $\R^d$} \}\ge\nonumber\\& C_1\left( \frac{p-1}{p(d-1)}\right)^{p-1}d^{\frac{d(p-1)}{d-1}+\frac{p}{d}+\big(2-\frac1d\big)(q'r+\beta)-r-2}(d+q')^r G_{p,q,r,\alpha}(B_1).
\end{align}
\item[\textup{(iii)}]If $q'r<\displaystyle \frac{d-p}{d-1}-\beta$,
then the variational problem in the left-hand side of \eqref{e74} has a minimiser. Any such  minimiser, denoted by $\Omega_*$, satisfies
\begin{align*}
\left(\frac{2\rho(\Om_*)}{\textup{diam}(\Om_*)}\right)&^{\frac{d-p-(d-1)(q'r+\beta)}{d(d-1)}}\ge  C_1 \,\left( \frac{p-1}{p(d-1)}\right)^{p-1}(d+q')^r\nonumber\\&\times  d^{\frac{d(p-1)}{d-1}+\frac{p}{d}+\big(2-\frac1d\big)(q'r+\beta)-r-2}.
\end{align*}
 \item [\textup{(iv)}] If $q'r<\displaystyle \frac{d-p}{d-1}-\beta$ then
\begin{equation*}
\sup \{G_{p,q,r,\alpha}(\Om): \Om\, \textup{non-empty, open, convex in}\,\R^d \}=\infty.
\end{equation*}
 \item [\textup{(v)}] If $r<0$, then
\begin{align*}\inf &\{G_{p,q,r,\alpha^*}(\Om): \Om\,\textup{non-empty, open, bounded and convex in $\R^d$} \}\nonumber\\&=G_{p,q,r,\alpha^*}(B_1),
\end{align*}
and $B_1$ is a minimiser, unique up to homotheties.
\end{itemize}
\end{theorem}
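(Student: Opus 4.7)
The plan is to combine two classical Schwarz-symmetrization inequalities. Let $\Om^\sharp$ denote the open ball centred at the origin with $|\Om^\sharp|=|\Om|$. By the Saint-Venant-type inequality recorded in \eqref{e82}, $T_q(\Om) \le T_q(\Om^\sharp)$, with equality if and only if $\Om$ is a ball. By the classical isocapacitary inequality for $p$-capacity (a consequence of the P\'olya--Szeg\H{o} symmetrization principle applied to the admissible test functions in \eqref{e1}: if $u$ is admissible for $\ov{\Om}$, its Schwarz symmetrization $u^*$ is admissible for $\ov{\Om^\sharp}$ and $\int_{\R^d}|Du^*|^p \le \int_{\R^d}|Du|^p$),
$$\cp_p(\ov{\Om}) \ge \cp_p(\ov{\Om^\sharp}),$$
again with equality if and only if $\Om$ is a ball (up to null sets, hence for an open, bounded, convex $\Om$ exactly when $\Om$ is a ball).

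Using the scaling relations \eqref{e2} and \eqref{e4} with $t=(|\Om|/|B_1|)^{1/d}$, so that $\Om^\sharp$ is (a translate of) $tB_1$, these two bounds rewrite as
\begin{equation*}
\cp_p(\ov{\Om}) \ge \Big(\frac{|\Om|}{|B_1|}\Big)^{(d-p)/d}\cp_p(\ov{B_1}),\qquad T_q(\Om) \le \Big(\frac{|\Om|}{|B_1|}\Big)^{(d+q')/d}T_q(B_1).
\end{equation*}
Since $r<0$, raising the torsion inequality to the power $r$ reverses its direction, giving $T_q(\Om)^r \ge (|\Om|/|B_1|)^{r(d+q')/d}T_q(B_1)^r$. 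Multiplying the two lower bounds and invoking the definition \eqref{e7} of $\alpha^*$ to recognise the combined exponent $(d-p+r(d+q'))/d=\alpha^*$, the volume factor on the right equals $(|\Om|/|B_1|)^{\alpha^*}$. Dividing through by $|\Om|^{\alpha^*}$ (recall $\beta=0$ since $\alpha=\alpha^*$) produces precisely
$$G_{p,q,r,\alpha^*}(\Om)\ge G_{p,q,r,\alpha^*}(B_1).$$

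For uniqueness up to homothety, note that equality in either of the two symmetrization inequalities already forces $\Om$ to be a ball, and scale-invariance of $G_{p,q,r,\alpha^*}$ established after \eqref{e7} shows that any ball achieves equality. The only minor obstacle is verifying that Schwarz symmetrization maps the specific admissibility class in \eqref{e1} into itself: one needs $u^*$ to be continuous, to vanish at infinity in the stated sense, and to dominate $\mathbf{1}_{\ov{\Om^\sharp}}$. The first two follow from the fact that symmetrization preserves the measure of superlevel sets and can be arranged to preserve continuity by a standard approximation of $u$ by Lipschitz admissible competitors, while the third is automatic from $u\ge \mathbf{1}_{\ov{\Om}}$ together with $|\ov{\Om^\sharp}|=|\ov{\Om}|$.
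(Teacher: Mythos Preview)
Your argument treats only part~(v); parts (i)--(iv) of the theorem are left entirely unaddressed. These require very different ingredients from Schwarz symmetrization. In the paper, (i) is obtained by testing with a degenerating family of ellipsoids $E_{c_\varepsilon}$ (one semi-axis $\varepsilon$, the remaining $d-1$ semi-axes $\varepsilon^{-1/(d-1)}$, so $|E_{c_\varepsilon}|=\omega_d$), combining the torsion upper bound \eqref{e56} with $\cp_p(\overline{E_{c_\varepsilon}})\le \cp_p(\overline{B_1})\varepsilon^{-(d-p)/(d-1)}$ by monotonicity and scaling. Part~(ii) uses Xiao's lower bound $\cp_p(\overline{\Om})\ge c_{d,p}\,P(\Om)^{(d-p)/(d-1)}$ together with the torsion lower bound \eqref{e54b}, and then Lemma~\ref{lem1} to control the resulting factor $P(\Om)^{1/(d-1)}/|\Om|^{1/d}$ by $\textup{diam}(\Om)/\rho(\Om)$. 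Parts~(iii) and~(iv) are then, respectively, a compactness argument (fix $\rho=1$, extract a Hausdorff-convergent subsequence) and a blow-up consequence of the quantitative inequality \eqref{e74a} derived in~(ii). None of this is touched by your symmetrization argument, so as a proof of the full statement there is a substantial gap.

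For part~(v) itself your proof is correct and follows essentially the same route as the paper: both use $T_q(\Om)\le T_q(\Om^\sharp)$ (reversed by the negative power $r$) together with the isocapacitary inequality $\cp_p(\overline{\Om})\ge \cp_p(\overline{\Om^\sharp})$, and then cancel the volume factor via the definition of $\alpha^*$. Your write-up is in fact more explicit than the paper's on the derivation of the isocapacitary inequality from P\'olya--Szeg\H{o} and on the uniqueness claim, both of which the paper simply records.
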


\smallskip

Assertion (i) above jibes with \cite[Theorem 3(i)]{MB} in the special case $p=q=2,\,\beta=0$.

\begin{proof}[Proof of Theorem \ref{the_min}]
Let $0<\varepsilon<1$ and let $E_{c_\varepsilon}$ be the closed ellipsoid with $d-1$ semi-axes of length $\varepsilon^{-1/(d-1)},$ and one semi-axis of length $\varepsilon$.
Then $|E_{c_\varepsilon}|=\omega_d$ and $\rho(E_{c_\varepsilon})=\varepsilon$, and for any $r \in \R$, by \eqref{e56}
\begin{equation}\label{e77}
T_q(E_{c_\varepsilon})^r
\le C_2\omega_d^r\varepsilon^{q'r},
\end{equation}
where $C_2$ is defined in \eqref{c_2}.
Since $E_{c_\varepsilon}$ is contained in a closed ball with radius $\varepsilon^{-1/(d-1)}$ we have by monotonicity of the $p$-capacity and scaling that
\begin{equation}\label{e78}
\cp_p(\overline{E_{c_\varepsilon}})\le \cp_p(\overline{B_{\varepsilon^{-1/(d-1)}}})\le \cp_p(\overline{B_1})\varepsilon^{-(d-p)/(d-1)}.
\end{equation}
Definition \eqref{e59}, \eqref{e77} and \eqref{e78} yield
\begin{equation}\label{e79}
G_{p,q,r,\alpha}(E_{c_\varepsilon})\le C_2 \omega_d^{r-\alpha-\beta}\cp_p(\overline{B_1})\varepsilon^{q'r-\frac{d-p}{d-1}+\beta}.
\end{equation}
The exponent of $\varepsilon$ in the right-hand side of \eqref{e79} is, by the assumption on $r$, strictly positive. This implies (i).

\smallskip

To prove (ii) we recall the following lower bound for the $p$-capacity,  $1<p<d$ proved in \cite[(2.5)]{xiao}
\begin{equation}
\label{xiao}
\cp_p(\overline \Omega) \ge d \omega_d\left( \frac{d(d-p)}{p(d-1)}\right)^{p-1}\left(\frac{P(\Omega)}{d\omega_{d}}\right)^{\frac{d-p}{d-1}}.
\end{equation}
Then by \eqref{xiao} and  \eqref{e54b} we get that
\begin{align*}
G_{p,q,r,\alpha}(\Om)&\ge d \omega_d\left(\frac{P(\Omega)}{d\omega_{d}}\right)^{\frac{d-p}{d-1}}\left( \frac{d(d-p)}{p(d-1)}\right)^{p-1} C_1\, \frac{|\Omega|^{q'r+r-\alpha}}{P(\Omega)^{q'r+\beta}}\nonumber\\&
= (d \omega_d)^{1-\frac{d-p}{d-1}}\left( \frac{d(d-p)}{p(d-1)}\right)^{p-1} C_1\left(\frac{P(\Omega)^{\frac{1}{d-1}}}{|\Omega|^{\frac 1 d}}\right)^{d-p-(d-1)(q'r+\beta)},
\end{align*}
where $C_1$ is defined in \eqref{c1}.
Since $\frac{d-p}{d-1}-q'r-\beta\ge 0$, we get by \eqref{e61a}
\begin{align*}
G_{p,q,r,\alpha}(\Om)&\ge \omega_d^{\frac{1}{d}(p-q'r-\beta)} d^{-\frac{d-p}{d}+\frac{d(p-1)}{d-1}+\frac{d-1}{d}(q'r+\beta)}\nonumber\\&\times\left( \frac{d-p}{p(d-1)}\right)^{p-1} C_1\left(\frac{\textup{diam}(\Om)}{2\rho(\Om)}\right)^{\frac{d-p-(d-1)(q'r+\beta)}{d(d-1)}}.
\end{align*}
By \eqref{e64} we get
\begin{align}\label{e74a}
 G_{p,q,r,\alpha}(\Om)\ge  C_1& \,\left( \frac{p-1}{p(d-1)}\right)^{p-1}d^{\frac{d(p-1)}{d-1}-\frac{2d-p+q'r+\beta}{d}-r+2q'r+2\beta} \nonumber\\&\times(d+q')^r \left(\frac{\textup{diam}(\Om)}{2\rho(\Om)}\right)^{\frac{d-p-(d-1)(q'r+\beta)}{d(d-1)}}G_{p,q,r,\alpha}(B_1).
\end{align}
This implies the assertion under (ii).

\smallskip

To prove the existence of a minimiser we can proceed similarly as in  the proof of Theorem \ref{the_new1}(ii).

\smallskip

Part  (iv)  follows by \eqref{e74a}.

\smallskip

To prove (v) we have by \eqref{e82} that for $r<0$, $T^r_q(\Om)\ge T^r_q(\Om^\sharp)$. Furthermore, $\cp_p(\overline{\Om})\ge \cp_p(\overline{\Om^\sharp})$, and $\frac{T^r_q(\Om^\sharp)\cp_p(\overline{\Om^\sharp)}}{|\Om|^{\alpha}}=G_{p,q,r,\alpha^*}(B_1)$.
\end{proof}

\section{Remarks}\label{sec5}

\begin{remark}\label{rem1}
Two different upper bounds for $p$-capacity are used in the proof of Theorem \ref{the_new1}(i). Inequalities  \eqref{e17} and \eqref{e66} are used in the first and second part respectively. By considering ellipsoids one observes that \eqref{e17} gives better estimates for elongated ellipsoids while \eqref{e66} gives better estimates for flat ellipsoids. Both \eqref{e17} and \eqref{e66} are equalities for a ball.
\end{remark}

\smallskip

\begin{remark}\label{rem2}
In order to show that the supremum in the left-hand side of \eqref{e51} is finite for $1<p\le d-1$ if and only if $q'r\ge p-\beta$ one would have to show that
\[
\cp_p(\overline{E_{a_\varepsilon}}) \ge c(p,d) \varepsilon^{d-p-1}
\]
\end{remark}

\smallskip

In the special case where $p=2$ and $d\ge 3$ we have the following.
\begin{remark}\label{rem3}
If $d\ge 3$, $p=2$, and if $q'r<2-\beta$ then
\begin{equation*}
\sup \{G_{2,q,r,\alpha}(\Om): \Om\, \textup{non-empty, open, convex in}\,\R^d \}=\infty.
\end{equation*}
\end{remark}
\begin{proof}
Consider the ellipsoid $E_{a_\varepsilon}$ of $\R^d$, with semi-axes $a_{\varepsilon}=(1,\varepsilon, \varepsilon,\ldots,\varepsilon)$, then for $\varepsilon\downarrow 0$,
\begin{align}\label{nn}
G_{2,q,r,\alpha}(E_{a_\varepsilon}) \ge \begin{cases}  C(3,q)  \varepsilon^{\frac{q'r+\beta-2}{d}}\big(\log(\varepsilon^{-1})\big)^{-1},&d=3,\\
C(d,q) \varepsilon^{\frac{q'r+\beta-2}{d}} &d>3.
\end{cases}
\end{align}
We have used the lower bound in \eqref{e56}, the definition of $\alpha^*$ in \eqref{e7}, and the formulae for the $2$-capacity of $E_{a_\epsilon}$ as $\varepsilon\downarrow 0$ in \cite[p.260]{IMcK}:
\begin{equation*}%\label{ik}
\cp_2\big(\overline{E_{a_\vps}})\big)=\begin{cases}4\pi\big(\log(\varepsilon^{-1})\big)^{-1}(1+o(1)),&d=3,\ \varepsilon\downarrow0,\\
\ds\frac{2\pi^{d/2}(d-3)}{\Gamma(d/2)}\varepsilon^{d-3}(1+o(1)),&d>3,\ \varepsilon\downarrow0.
\end{cases}
\end{equation*}
If $q'r<2-\beta$, then the right-hand side of \eqref{nn} is unbounded on $(0,1)$.
\end{proof}
The $\log$ factor $\big(\log(\varepsilon^{-1})\big)^{-1}$ for $d=3$ did not play a role in the above as we have the strict inequality $q'r<2$.

\smallskip

It is of course possible to consider the maximisation of $G_{p,q,r,\alpha}(\Om)$ over the collection of open sets in $\R^d$ with finite perimeter or measure. We have the following.
\begin{remark}\label{rem4}
\begin{itemize} \item[]If $d-p>\beta(d-1)$, and if $1<p<d$, then
\begin{equation}\label{x1}
\sup \{G_{p,q,r,\alpha}(\Om): \Om\, \textup{non-empty, open in $\R^d$ with $P(\Om)<\infty$}\}=\infty.
\end{equation}
\item[] If $1<p<d$, then
\begin{equation}\label{x0}
\sup \{G_{p,q,r,\alpha^*}(\Om): \Om\, \textup{non-empty, open in $\R^d$ with $|\Om|<\infty$}\}=\infty.
\end{equation}
\end{itemize}
\end{remark}
\begin{proof}
Let $\Omega_{\varepsilon}$ be the union of the open ball $B_1=\{x\in \R^d:|x|<1\}$, and the ellipsoid
\begin{equation}\label{x2}
E_{b_{\varepsilon}}=\varepsilon^{-3}+\{x\in\R^d: \varepsilon^{-2d}x_d^2+\sum_{j=1}^{d-1}(\varepsilon x_j)^2<1\}.
\end{equation}
Then $B_1\cap E_{b_{\varepsilon}}=\emptyset, \,\varepsilon<\frac12$, and by monotonicity of the $q$-torsion
$T_q(\Om_{\varepsilon})^r\ge T_q(B_1)^r$. By monotonicity of the $p$-capacity,
\begin{align}\label{x3}
\cp_p(\overline{\Om_{\varepsilon}})&\ge \cp_p(\overline{E_{b_{\varepsilon}}})\ge d \omega_d\left(\frac{P(E_{b_{\varepsilon}})}{d\omega_{d}}\right)^{\frac{d-p}{d-1}}\left( \frac{d(d-p)}{p(d-1)}\right)^{p-1}.
\end{align}
We also have that $|\Om_{\varepsilon}|=\omega_d(1+\varepsilon)\le 3\omega_d/2$, and $P(\Om_{\varepsilon})=P(B_1)+P(E_{b_{\varepsilon}})$.
Putting this together yields
\begin{equation}\label{x4}
G_{p,q,r,\alpha}(\Om_{\varepsilon})\ge k_d\frac{T_q(B_1)^r(P(E_{b_{\varepsilon}}))^{\frac{d-p}{d-1}}}{(3\omega_d/2)^{\alpha}(P(B_1)+P(E_{b_{\varepsilon}}))^{\beta}},
\end{equation}
where $k_d$ can be read-off from \eqref{x3}. By the second inequality in \eqref{e61},
\begin{equation}\label{x5}
P(E_{\varepsilon})\ge \frac{|E_{\varepsilon}|}{a_d}=\omega_d\varepsilon^{1-d}.
\end{equation}
Since $d-p>\beta(d-1)$, and $P(E_{\varepsilon})\rightarrow\infty$ as $\varepsilon\downarrow 0$ we conclude that the right-hand side of \eqref{x4} tends to $\infty$.
This implies \eqref{x1}.

To prove \eqref{x0} we follow the lines above with $\beta=0$.
\end{proof}
\begin{remark}\label{rem4}
The estimates in the proof of Theorem \ref{the_new1} (or Theorem \ref{the_min}) are too crude to discuss the existence of a maximiser (minimiser) if inequality \eqref{e52} is not strict. However, Theorem \ref{the_new1}(iv) gives a partial result in that case.
\end{remark}

\section*{Acknowledgements}
The second author was supported by the Project MUR PRIN-PNRR 2022:  "Linear and Nonlinear PDE’S: New directions and Applications", P2022YFA and by
GNAMPA group of INdAM.

\section*{Conflicts of interest and data availability statement}
The authors declare that there is no conflict of interest. Data sharing not applicable to this article as no datasets were generated or analyzed during the current study.


\begin{thebibliography} {99}

\bibitem{vdB1} van den Berg, M.: On some isoperimetric inequalities for the Newtonian capacity. Communications in Contemporary Mathematics 27, Paper No. 2450027 (2025)

\bibitem{cri} van den Berg, M., Ferone, V., Nitsch,C., Trombetti, C.: On P\'olya's inequality for torsional rigidity and first Dirichlet eigenvalue. Integral Equations Operator Theory 86, 579--600 (2016)

\bibitem{MB} van den Berg, M., Buttazzo, G.: On capacity and torsional rigidity. Bulletin of the London Mathematical Society 53, 347--359 (2021)

\bibitem{MBP} van den Berg, M., Buttazzo, G., Pratelli, A.: On relations between principal eigenvalue and torsional rigidity. Commun. Contemp. Math. 23, Paper No. 2050093 (2021)

\bibitem{MM} van den Berg, M., Malchiodi, A.: On some variational problems involving capacity, torsional rigidity, perimeter and measure. Advances in Calculus of Variations 16, 961--974 (2023)

\bibitem{BBP1} Briani, L., Buttazzo,G.,  Prinari, F.:  Inequalities between torsional rigidity and principal eigenvalue of the $p$-Laplacian. Calc. Var. Partial Differential Equations 61, Paper No.78 (2022)

\bibitem{BBP2} Briani, L., Buttazzo,G.,  Prinari, F.: Some inequalities involving perimeter and torsional rigidity. Appl. Math. Optim. 84,  2727--2741 (2021)

\bibitem{CFG} Crasta, G., Fragal\`{a}, I., Gazzola, F.: On a long-standing conjecture by P\'{o}lya-Szeg\"{o} and related topics. Z. angew. Math. Phys. 56, 763--782 (2005)

\bibitem{EG} Evans, L. C., Gariepy, R. F.: Measure theory and fine properties of functions.
Textb. Math. CRC Press, Boca Raton, FL (1992)

\bibitem{Anona} Della Pietra, F., di Blasio, G.,  Gavitone, N.: Sharp estimates on the first Dirichlet eigenvalue of nonlinear elliptic operators via maximum principle. Adv. Nonlinear Anal. 9, 278--291 (2020)

\bibitem{JDE} Della Pietra, F., Gavitone, N., Guarino Lo Bianco, S.: On functionals involving the torsional rigidity related to some classes of nonlinear operators. J. Diff. Equations 265, 6424--6442 (2018)

\bibitem{MaNa} Della Pietra, F., Gavitone, N.: Sharp bounds for the first eigenvalue and the torsional rigidity related to some anisotropic operators. Math. Nachr. 287, 194--209 (2014)

\bibitem{HKM} Heinonen, J., Kilpel\"{a}inen,T., Martio, O.: Nonlinear potential theory of degenerate elliptic equations. Dover Publications, Inc., Mineola, NY, (2006)

\bibitem{IMcK} It\^o, K., McKean, H.P.: Diffusion processes and their sample paths. Second printing, corrected. Die Grundlehren der mathematischen Wissenschaften, 125 Springer-Verlag, Berlin-New York (1974)

\bibitem{li} Li, R., Xiong, C.: Sharp bounds for the anisotropic $p$-capacity of Euclidean compact sets.
J. Differential Equations 317, 196--224 (2022)

\bibitem{PSZ} P\'{o}lya, G., Szeg\"{o}, G.: Isoperimetric Inequalities in Mathematical Physics. Ann. of Math. Stud. 27, Princeton University Press, Princeton (1951)

\bibitem{RS} Schneider, R.: Convex bodies: the Brunn-Minkowski theory. Second expanded edition. Encyclopedia of Mathematics and its Applications 151, Cambridge University Press, Cambridge (2014)

\bibitem{ta} Talenti, G.: Elliptic equations and rearrangements. Ann. Scuola Norm. Sup. Pisa Cl. Sci. 4, 697--718 (1976)


\bibitem{xiao} Xiao, J.: $p$-capacity vs surface area. Adv. Math. 308, 1318--1336 (2012)



\end{thebibliography}
\end{document}